\tikzset{individu/.style={draw,thick}}
\numberwithin{equation}{section}
\theoremstyle{plain}
\newtheorem{theorem}{Theorem}[section]
\newtheorem{lemma}[theorem]{Lemma}
\newtheorem{proposition}[theorem]{Proposition}
\theoremstyle{definition}
\theoremstyle{remark}
\newtheorem{remark}[theorem]{Remark}
\newcommand{\N}{\mathbb{N}}
\renewcommand{\S}{\mathbb{S}}
\newcommand{\Z}{\mathbb{Z}}
\newcommand{\C}{\mathbb{C}}
\newcommand{\calF}{\mathcal{F}}
\newcommand{\calL}{\mathcal{L}}
\newcommand{\floor}[1]{{\left\lfloor #1 \right\rfloor}}
\renewcommand{\bar}[1]{\overline{#1}}
\renewcommand{\tilde}[1]{\widetilde{#1}}
\renewcommand{\epsilon}{\varepsilon}
\renewcommand{\phi}{\varphi}
\newcommand{\Addresses}{{
  \bigskip
  \footnotesize
  
  \textsc{Université Paris-Saclay, CNRS, CEA, Institut de physique théorique, 91191 Gif-sur-Yvette, France}\par\nopagebreak
  \textit{E-mail address}: \texttt{sanjay.ramassamy at ipht.fr}

}}
\title{Laminations of a graph on a pair of pants}
\author{Sanjay Ramassamy}
\date{\today}
\newcommand{\calP}{\mathcal{P}}
\begin{document}

\maketitle

\begin{abstract}
A lamination of a graph embedded on a surface is a collection of pairwise disjoint non-contractible simple closed curves drawn on the graph. In the case when the surface is a sphere with three punctures (a.k.a. a pair of pants), we first identify the lamination space of a graph embedded on that surface as a lattice polytope, then we characterize the polytopes that arise as the lamination space of some graph on a pair of pants. This characterizes the image of a simplified variant of the spectral map for the vector bundle Laplacian for a flat connection on a pair of pants. The proof uses a graph exploration technique akin to the peeling of planar maps.
\end{abstract}

\section{Introduction}

\subsection{Overview}

In this article we consider graphs that are embedded on a pair of pants, that is, the sphere $\S^2$ from which three points $P_1,P_2$ and $P_3$ have been removed. A lamination of such a graph is a collection of pairwise disjoint simple cycles on the graph such that each cycle encircles exactly one point  $P_i$. To a lamination we associate the triple of non-negative integers corresponding to the number of cycles in the lamination encircling each $P_i$.

The first result that we prove holds for a fixed graph, it characterizes all the possible triples of integers that one can obtain as one varies the lamination of the graph (Proposition~\ref{prop:onegraph}). This collection of triples is called the lamination space of the graph and we show that this lamination space is a convex lattice polytope characterized by six non-negative integers which arise as some geometric measurements from the graph.

Next we characterize all the lattice polytopes that can arise as the lamination space of some graph, by providing a necessary and sufficient set of inequalities that the aforementioned six integers associated to a graph should satisfy (Theorem~\ref{thm:charac}).

While the questions tackled and the answers provided pertain to the field of topological combinatorics, the motivation comes from the notion of spectral map associated to discrete operators on graphs, as explained in the following subsection. The discussion of spectral maps will be circumscribed to the next subsection and will not be mentioned again in the rest of the article.

\subsection{Motivation}

Consider a graph $G$ embedded on a pair of pants $\Sigma$, with edges carrying positive weights. We also equip $G$ with a flat $SU(2,\C)$ connection, that is, we attach an element of $SU(2,\C)$ to each directed edge, such that the two directions of an edge carry inverse elements of $SU(2,\C)$ and the product along any directed cycle on the graph which is contractible on $\Sigma$ is equal to the identity. The determinant of the vector-bundle Laplacian operator on $G$ can be expressed as a polynomial in three variables~\cite{Ke1}.

The map associating to $G$ this polynomial (or rather the algebraic surface obtained as the zero locus of this polynomial) is called the \emph{spectral map}. This map may be defined for operators other than the Laplacian and for surfaces other than the pair of pants~\cite{KO,GK,Foc,Ke1,Ke2,KW2,Ke4,George}. An important question is to characterize the image of the spectral map, namely the polynomials that can be realized by some graph. This question is very hard in general, and although it has been solved in some special cases (see the above references), it remains open in general.

One may consider a map which is simpler than the spectral map and which we call the \emph{topological spectral map}. We consider an unweighted graph rather than a weighted graph and we associate to it a lattice polytope rather than a polynomial. This lattice polytope is obtained by taking the Newton polytope of the polynomial associated to the graph carrying some generic edge weights. In the present case, the Newton polytope of a three-variable polynomial $P(X_1,X_2,X_3)$ is defined as the convex hull of all the triples $(m_1,m_2,m_3)$ such that the monomial $X_1^{m_1} X_2^{m_2} X_3^{m_3}$ has a non-zero coefficient in $P$. A simpler question than (and usually a prerequisite to) characterizing the image of the spectral map is to characterize the image of the topological spectral map. As explained in the next paragraph, in this article we solve this simpler question in the case of the vector-bundle Laplacian with a flat connection on a pair of pants and we hope than it may serve to solve the general question in future work.

Following pioneering work by Chaiken~\cite{Chaiken} and Forman~\cite{For} on the combinatorial study of the determinant of the vector-bundle Laplacian (which is itself a generalization of Kirchhoff's classical matrix-tree theorem~\cite{Ki}), Kenyon~\cite{Ke1} showed that the polynomial $P$ associated to a weighted graph $G$ on a pair of pants could be expressed as a sum over essential cycle-rooted spanning forests on $G$, that is, spanning forests where each connected component is a unicycle (a connected graph with as many vertices as edges) whose cycle is non-contractible on the pair of pants. As a consequence he deduced that the Newton polytope of the polynomial $P$ is equal to the lamination space of $G$ from which the point $(0,0,0)$ is removed. Since in the present article we characterize all the possible lamination spaces one may obtain as one varies the graph $G$, we are also characterizing the image of the topological spectral map for the vector-bundle Laplacian with a flat connection on a pair of pants.

\subsection*{Organization of the paper}

We introduce the relevant definitions and state our main results in Section~\ref{sec:mainresults}. In Section~\ref{sec:specialloops} we describe an exploration process of a graph on a pair of pants and use it to realize the lamination space of that graph as a polytope. In passing we define three collections of special loops and study their properties. In Section~\ref{sec:necessary} we derive some necessary conditions for the polytopes arising as the lamination space of some graph. We show in Section~\ref{sec:classofgraphs} that these conditions are sufficient by constructing a class of graphs having as a lamination space a given polytope satisfying the aforementioned conditions.

\section{Main results}
\label{sec:mainresults}

We consider the three-holed sphere $\Sigma$ obtained by removing from the sphere $\S^2$ three distinct points $P_1,P_2$ and $P_3$. Every simple closed curve $C$ on $\S^2$ which does not pass through the points $P_i$ separates $\S^2$ into two hemispheres. For every $1 \leq i \leq 3$, we denote by $H_i(C)$ (resp. $H'_i(C)$) the connected component of $\S^2 \setminus C$ which contains $P_i$ (resp. which does not contain $P_i$). A simple closed curve $C$ is called \emph{of type $i$} for $1 \leq i \leq 3$ if one of the hemispheres defined by $C$ contains $P_i$ and the other hemisphere contains the other two points, i.e. if
\[
H_i(C)=H'_{i+1}(C)=H'_{i+2}(C).
\]
In the previous equalities, as well as in the remainder of this article, the indices $1 \leq i \leq 3$ should be considered modulo $3$. We will also denote by $\bar{H_i}(C)$ and $\bar{H'_i}(C)$ the closed hemispheres (containing $C$ this time).

Let $G$ be a connected nonempty graph embedded in $\S^2$. The connected components of $\S^2 \setminus G$ are topological disks, they are called the faces of $G$ and we denote by $\calF$ the set of faces of $G$. We say that $G$ is a \emph{$\Sigma$-graph} if there exist three distinct faces $(F_1,F_2,F_3)\in\calF$ (called \emph{marked faces}) such that $P_i$ is in the interior of $F_i$ for all $1\leq i\leq 3$. A $\Sigma$-graph is more than just a graph embedded in $\Sigma$ because we require that the graph actually separates the three punctures. A \emph{lamination} of the $\Sigma$-graph $G$ is a collection $L$ of pairwise disjoint combinatorial simple cycles on $G$ such that each loop in $L$ is non-contractible on $\Sigma$. By disjoint we mean having no vertex in common. For any non-negative integers $m_1$, $m_2$ and $m_3$, a lamination is said to be \emph{of type $(m_1,m_2,m_3)$} if for any $1 \leq i \leq 3$ it contains $m_i$ loops of type $i$. The \emph{lamination space} $\calL(G)$ of a $\Sigma$-graph $G$ is defined to be the set of all $(m_1,m_2,m_3)\in (\Z_+)^3$ such that $G$ admits a lamination of type $(m_1,m_2,m_3)$. Below we will describe the lamination space of a given $\Sigma$-graph $G$ as the integer points of a lattice polytope defined in terms of some geometric characteristics of $G$.

We define a distance function $d_G$ on $\calF$ such that any two faces sharing a vertex are at distance $1$ for $d_G$. Let $G^*$ be the dual graph of $G$ (seen as a graph in $\S^2$). Construct $\tilde{G^*}$ by adding to $G^*$ an edge between any two vertices of $G^*$ such that the corresponding two faces of $G$ share a vertex in $G$. The distance $d_G$ is defined to be the usual graph distance on the vertex set of $\tilde{G^*}$, which is canonically in bijection with $\calF$. In the special case when all the vertices of $G$ have degree $3$ (in which case two faces share a vertex if and only if they share an edge), then $\tilde{G^*}=G^*$ and $d_G$ is the classical distance between two faces corresponding to the graph distance on the dual graph. From now on, whenever we mention the distance between two faces of $G$, the distance function will implicitly be $d_G$.

Define $d_1(G):=d_G(F_2,F_3)$, $d_2(G):=d_G(F_1,F_3)$ and $d_3(G):=d_G(F_1,F_2)$. Also, for any $1\leq i\leq 3$, define $M_i(G)$ to be the maximal number of pairwise disjoint simple loops of type $i$ one can simultaneously draw on $G$. Given a $\Sigma$-graph $G$, we define the sextuple
\[
\sigma(G):=(M_1(G),M_2(G),M_3(G),d_1(G),d_2(G),d_3(G))\in(\Z_+)^3 \times \N^3,
\]
where $\Z_+$ (resp. $\N$) denotes the set of all nonnegative (resp. positive) integers. See Figure~\ref{fig:facedistances} for an example.

\begin{figure}[htbp]
\centering
\includegraphics[height=1.3in]{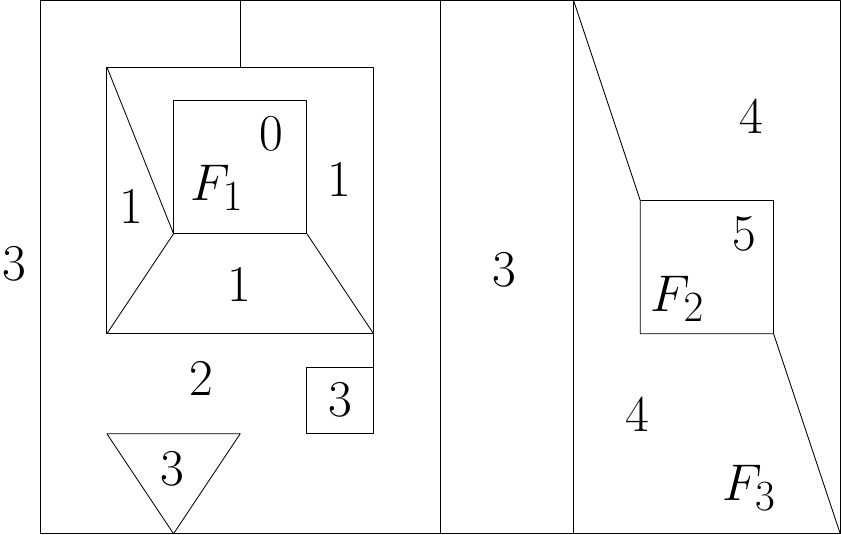}
\caption{A $\Sigma$-graph $G$ with each face labelled by its distance to the marked face $F_1$. For this graph, $\sigma(G)=(4,1,1,1,4,5)$.}
\label{fig:facedistances}
\end{figure}

Given a sextuple of integers $\tau=(a,b,c,d,e,f)\in(\Z_+)^3 \times \N^3$, we define the convex lattice polytope $\calP_\tau$ by
\[
\calP_\tau:=\left\{(x,y,z)\in(\Z_+)^3\vert x\leq a,\ y\leq b,\ z\leq c,\ y+z\leq d,\ x+z\leq e,\ x+y\leq f\right\}.
\]

\begin{proposition}
\label{prop:onegraph}
For any $\Sigma$-graph $G$, its lamination space $\calL(G)$ is the polytope $\calP_{\sigma(G)}$.
\end{proposition}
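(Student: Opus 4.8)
The plan is to prove the two inclusions $\calL(G)\subseteq\calP_{\sigma(G)}$ and $\calP_{\sigma(G)}\subseteq\calL(G)$ separately. The first is a soft topological counting argument, while the second requires producing an explicit lamination for each admissible triple and is where the exploration process and the families of \emph{special loops} of Section~\ref{sec:specialloops} come in.

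For $\calL(G)\subseteq\calP_{\sigma(G)}$, let $L$ be a lamination of type $(x,y,z)$. The $x$ loops of type $1$ in $L$ are pairwise disjoint, so $x\le M_1(G)$, and likewise $y\le M_2(G)$ and $z\le M_3(G)$. For the remaining three inequalities I would first note that every loop of type $2$ and every loop of type $3$ separates $F_2$ from $F_3$: since $F_i$ is connected and disjoint from every loop drawn on $G$, it lies in the open hemisphere containing $P_i$, and for a loop of type $2$ or $3$ the points $P_2$ and $P_3$ lie in different hemispheres. Now fix a geodesic $\pi=(A_0,\dots,A_k)$ in $\tilde{G^*}$ with $A_0=F_2$, $A_k=F_3$ and $k=d_1(G)$, and for a loop $\ell\in L$ colour each $A_t$ by the side of $\ell$ containing it. A local analysis shows that the colour changes between $A_t$ and $A_{t+1}$ only if $\ell$ uses the primal edge common to $A_t$ and $A_{t+1}$ (when there is one) or a primal vertex common to them: if $\ell$ misses a small disk around the vertex of $G$ witnessing that step of $\tilde{G^*}$, then $A_t$ and $A_{t+1}$ are joined inside $\S^2\setminus\ell$. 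The $y+z$ loops of types $2$ and $3$ being pairwise vertex-disjoint (hence pairwise edge-disjoint, and no vertex of one lying on another), no single step of $\pi$ is a colour change for two of them; and each of these loops, separating $F_2$ from $F_3$, produces an odd, hence positive, number of colour changes along $\pi$. Summing over these $y+z$ loops gives $y+z\le k=d_1(G)$, and the symmetric arguments give $x+z\le d_2(G)$ and $x+y\le d_3(G)$, so $(x,y,z)\in\calP_{\sigma(G)}$.

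For $\calP_{\sigma(G)}\subseteq\calL(G)$, the plan is to use the exploration of $G$ from each marked face, developed in Section~\ref{sec:specialloops}, to produce for $i=1,2,3$ a nested family of special loops $\gamma^i_1,\dots,\gamma^i_{M_i(G)}$ of type $i$, with $\gamma^i_m$ confined to the $d_G$-ball of radius $m$ around $F_i$ in a sense precise enough that the arithmetic below closes up. Granting this, given $(x,y,z)\in\calP_{\sigma(G)}$ I would take $\{\gamma^1_m:1\le m\le x\}\cup\{\gamma^2_m:1\le m\le y\}\cup\{\gamma^3_m:1\le m\le z\}$, which makes sense since $x\le M_1(G)$, $y\le M_2(G)$, $z\le M_3(G)$. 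Loops of a common type are disjoint by nesting; and if $\gamma^i_m$ and $\gamma^j_{m'}$ with $i\ne j$ met, then a face near the meeting point would be close to both $F_i$ and $F_j$, and the triangle inequality for $d_G$ would force $d_G(F_i,F_j)<m+m'$, contradicting the relevant polytope inequality (for instance $m+m'\le x+y\le d_3(G)=d_G(F_1,F_2)$ when $\{i,j\}=\{1,2\}$). Hence this collection is a lamination of type $(x,y,z)$, so $(x,y,z)\in\calL(G)$, and combining with the first inclusion gives $\calL(G)=\calP_{\sigma(G)}$.

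The main obstacle, and the heart of the argument, is the special-loop lemma: one must set up the peeling-style exploration of the faces of $G$ outward from a marked face $F_i$ in order of $d_G$-distance, record the successive moments at which the explored region wraps around $P_i$, read off a loop from the boundary at each such moment, and then prove that these loops are of type $i$, pairwise disjoint, confined to the explored ball, and — the point that makes the two inclusions match up numerically — that there are exactly $M_i(G)$ of them, i.e. that this canonical family realizes the maximum. Pinning down the exact confinement radius so that the triangle-inequality computation above goes through, and controlling how the exploration behaves when the boundary of the explored region fails to be a simple curve, are the delicate technical points.
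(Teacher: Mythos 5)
Your proposal is correct and follows essentially the same route as the paper: the upper bounds $m_{i+1}+m_{i+2}\le d_i(G)$ come from the fact that each type-$(i+1)$ or type-$(i+2)$ loop separates $F_{i+1}$ from $F_{i+2}$ and distinct loops of a lamination cannot be crossed at the same step (the paper packages this as Lemma~\ref{lem:distances} applied to the subgraph formed by the lamination, your geodesic colour-change count is the same argument made explicit), while the reverse inclusion is obtained exactly as in the paper by assembling the first $x$, $y$, $z$ special loops of each type and ruling out intersections via the triangle inequality. The deferred ``special-loop lemma'' you identify as the technical heart is precisely the content of Lemmas~\ref{lem:simpleloopstructure}--\ref{lem:interaction}, including the key identity $\tilde{M_i}=M_i(G)$.
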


Proposition~\ref{prop:onegraph} is proved in Section~\ref{sec:specialloops}.

\begin{remark}
The inequalities $m_i\leq M_i(G)$ are not redundant with the inequalities $m_i+m_{i+1}\leq d_{i+2}(G)$, as illustrated by Figure~\ref{fig:nonredundant}. On that picture, $d_1(G)=d_2(G)=d_3(G)=2$ and $M_1(G)=M_2(G)=M_3(G)=1$. The triple $(m_1,m_2,m_3)=(2,0,0)$ verifies the inequalities $m_i+m_{i+1}\leq d_{i+2}$, but that graph has no lamination of type $(2,0,0)$. This proposition corrects a statement made in~\cite{Ke1}, which claimed that the inequalities $m_i+m_{i+1}\leq d_{i+2}(G)$ alone define the lamination space, without mention of the inequalities $m_i\leq M_i(G)$.
\end{remark}

\begin{figure}[htbp]
\centering
\includegraphics[height=.8in]{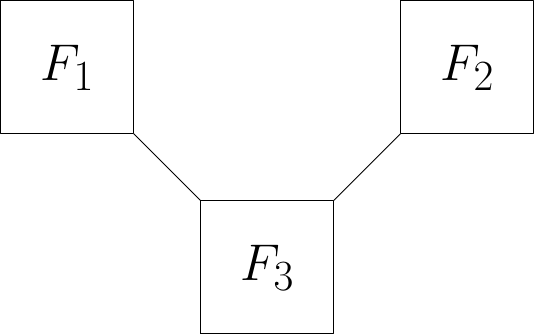}
\caption{An example of a graph $G$ illustrating the need to require the inequalities $m_i \leq M_i(G)$ in order to characterize the types of laminations that can arise.}
\label{fig:nonredundant}
\end{figure}

We can now characterize all the convex lattice polytopes that arise as the lamination space of some $\Sigma$-graph. By the previous proposition, it suffices to characterize the sextuples $\tau$ that arise as some $\sigma(G)$.

\begin{theorem}
\label{thm:charac}
Fix $\tau=(\mu_1,\mu_2,\mu_3,\delta_1,\delta_2,\delta_3)\in(\Z_+)^3 \times \N^3$. There exists a $\Sigma$-graph $G$ such that $\sigma(G)=\tau$ if and only if the following inequalities hold for all $1\leq i\leq 3$:
\begin{enumerate}[label=($T_{\arabic*}$)]
\item $\max(\mu_{i+1},\mu_{i+2})\leq \delta_i \leq \mu_{i+1}+\mu_{i+2}$;
\item $\delta_{i+1}+\delta_{i+2}\leq 2 \mu_i + \delta_i +1$.
\end{enumerate}
\end{theorem}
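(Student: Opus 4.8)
The plan is to prove the two implications of the equivalence separately. Throughout the ``only if'' direction, fix a $\Sigma$-graph $G$ and abbreviate $\mu_i:=M_i(G)$ and $\delta_i:=d_i(G)=d_G(F_{i+1},F_{i+2})$. The left-hand inequality of $(T_1)$, namely $\delta_i\ge\max(\mu_{i+1},\mu_{i+2})$, is the easy one: pick a family of $\mu_{i+1}$ pairwise vertex-disjoint type-$(i+1)$ loops realizing $M_{i+1}(G)$; since each such loop has $P_{i+1}$ on one side and $P_{i+2}$ on the other, every path in $\tilde{G^*}$ from $F_{i+1}$ to $F_{i+2}$ must cross all of them, and because the loops are pairwise vertex-disjoint each edge of $\tilde{G^*}$ crosses at most one of them (an edge of $\tilde{G^*}$ crosses a loop only through a primal edge or a primal vertex lying on that loop, and an edge, resp.\ a vertex, lies on at most one of the loops). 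Hence $\delta_i\ge\mu_{i+1}$, and symmetrically $\delta_i\ge\mu_{i+2}$.

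For the right-hand inequality of $(T_1)$ and for $(T_2)$ I would invoke the exploration process of Section~\ref{sec:specialloops}: peeling the successive metric balls $B_k:=\{f\in\calF:d_G(F_\ast,f)\le k\}$ around a marked face $F_\ast$ and extracting from the boundary of each $B_k$ (resolving pinch vertices if necessary) the simple loops enclosing the punctures not in $B_k$. Peeling around $F_{i+1}$: as long as $k<\delta_i$ the ball $B_k$ omits $F_{i+2}$, so the loop it produces has $P_{i+2}$ on its far side and $P_{i+1}$ on its near side, hence is of type $i+1$ (if $P_i\in B_k$) or of type $i+2$ (otherwise); these loops are nested, hence pairwise disjoint and pairwise distinct, and splitting them by type gives at most $\mu_{i+1}$ of type $i+1$ and at most $\mu_{i+2}$ of type $i+2$, so $\delta_i\le\mu_{i+1}+\mu_{i+2}$. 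Peeling around $F_i$ instead: the loops produced at small radii enclose neither $P_{i+1}$ nor $P_{i+2}$, hence are of type $i$; being nested they are at most $\mu_i$ in number, and comparing this count with the Gromov-product quantity $\tfrac12(\delta_{i+1}+\delta_{i+2}-\delta_i)$ yields $(T_2)$ once the additive constant is pinned down. Obtaining the sharp constant $1$ in $(T_2)$ — rather than the cruder bound one gets by naively estimating distances to the peeled loops — is where the delicate bookkeeping with pinch vertices and with the shortcut edges of $\tilde{G^*}$ is required.

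For the ``if'' direction I would first reformulate the target. Given $\tau=(\mu_1,\mu_2,\mu_3,\delta_1,\delta_2,\delta_3)$, set $s_i:=\mu_{i+1}+\mu_{i+2}-\delta_i$; a direct computation shows that $(T_1)$ is equivalent to $0\le s_i\le\min(\mu_{i+1},\mu_{i+2})$ for all $i$ and that $(T_2)$ is equivalent to the near-triangle inequality $s_i\le s_{i+1}+s_{i+2}+1$ for all $i$. I would then build a $\Sigma$-graph $G$ with $\sigma(G)=\tau$ out of three ``cycle stacks'', the $i$-th being a concentric family of $\mu_i$ simple loops enclosing $P_i$ with $F_i$ the innermost face, glued together on $\S^2$ through a small central piece so that, for each pair, exactly $s_i$ loops of stack $i+1$ share a vertex with the corresponding loop of stack $i+2$. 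For such a graph a shortest $F_{i+1}$-to-$F_{i+2}$ path crosses the $\mu_{i+1}$ loops of stack $i+1$ and the $\mu_{i+2}$ loops of stack $i+2$, saving one step at each of the $s_i$ shared crossings, so $d_G(F_{i+1},F_{i+2})=\mu_{i+1}+\mu_{i+2}-s_i=\delta_i$; and since no type-$i$ loop can escape stack $i$ (the central piece being built to carry none), $M_i(G)=\mu_i$. By Proposition~\ref{prop:onegraph} the lamination space of this $G$ is then $\calP_\tau$.

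The step I expect to be the main obstacle is the construction in the ``if'' direction, on two counts. First, one must exhibit a single family of graphs flexible enough to realize, for every admissible $\tau$, all three pairwise sharing amounts $s_1,s_2,s_3$ simultaneously on one sphere; this is exactly where the near-triangle inequality $s_i\le s_{i+1}+s_{i+2}+1$ — hence the additive $1$ in $(T_2)$ — is consumed, and where off-by-one issues (a loop versus its inside, a vertex versus its incident faces, the shortcut edges of $\tilde{G^*}$) must be tracked with care. Second, and more conceptually, the upper bound $M_i(G)\le\mu_i$ requires controlling \emph{all} type-$i$ loops of the constructed graph, which calls for a clean statement that the central piece and the gluing introduce no new type-$i$ loop, so that every type-$i$ loop is confined to stack $i$. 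On the necessity side the counterpart difficulty is purely technical: extracting honest simple loops from the peeling and accounting precisely for the radii at which a peeled loop changes its topological type.
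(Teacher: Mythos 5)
Your treatment of $(T_1)$ is sound: the lower bound $\delta_i\ge\max(\mu_{i+1},\mu_{i+2})$ is essentially Lemma~\ref{lem:distances}, and your count for the upper bound (the $\delta_i$ pairwise disjoint boundary loops $C_{i+1,i+2}^k$, $1\le k\le\delta_i$, each necessarily of type $i+1$ or $i+2$, hence at most $\mu_{i+1}+\mu_{i+2}$ in number) is correct and in fact tidier than the paper's case analysis on whether $C_{i+1}^{M_{i+1}}$ meets $C_{i+2}^{M_{i+2}}$ --- though ``nested, hence pairwise disjoint'' needs the same argument as in Lemma~\ref{lem:specialloopdef} to justify vertex-disjointness across radii. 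However, your proof of $(T_2)$ is not a proof: ``comparing this count with the Gromov-product quantity \ldots\ once the additive constant is pinned down'' only restates the inequality to be shown. $(T_2)$ is not obtained by a count along a single peeling; the key idea, absent from your proposal, is a cut-and-paste argument. Setting $n_i=\mu_{i+1}+\mu_{i+2}-\delta_i$, one shows via Lemma~\ref{lem:interaction} that $C_1^{M_1}$ is disjoint from $H_2(C_2^{M_2+1-n_3})\cup H_3(C_3^{M_3+1-n_2})$, and that if $n_1>n_2+n_3+1$ then $C_2^{M_2-n_3}$ and $C_3^{M_3-n_2-1}$ must intersect; one then splices pieces of $C_2^{M_2-n_3}$ and $C_3^{M_3-n_2}$ into a new simple closed curve of type $1$ lying inside $H_1'(C_1^{M_1})$, contradicting the maximality of $M_1$. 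Without this construction (and the degenerate cases $M_2=n_3$, etc.) you do not obtain the constant $1$, nor any bound of this shape.

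The sufficiency direction has a more serious problem: the statement you hope for, that ``the central piece and the gluing introduce no new type-$i$ loop, so that every type-$i$ loop is confined to stack $i$,'' is false in general. When the stacks around $P_{i+1}$ and $P_{i+2}$ overlap deeply, new simple loops of type $i$ appear that travel through the overlap region; this is quantified in Lemma~\ref{lem:sigmaGt}, where $M_i(G_t)=1+l_i+\max\bigl(0,\lfloor(n_i-\max(n_{i+1},n_{i+2}))/2\rfloor\bigr)$ carries a correction term beyond the $1+l_i$ loops of the $i$-th stack. This is precisely why the paper cannot simply take stack sizes $\mu_i$ and overlaps $s_i$: it must solve for the building-block parameters (e.g.\ $l_1=\nu_2-\nu_1+\mu_1$, $n_2=2\nu_2-\nu_1$, \dots) so that the corrected formulas return $\tau$, and it must treat separately the extreme case $\nu_1>\mu_1+\nu_2$, where the realizing graph has \emph{no} loops drawn around hole $1$ at all and all $\mu_1$ type-$1$ loops are created by the overlap of the stacks around holes $2$ and $3$. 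Your construction as described would give $M_i(G)>\mu_i$ for such $\tau$, so the ``if'' direction is not established.
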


The fact that conditions $(T_1)$ and $(T_2)$ are necessary is proved in Section~\ref{sec:necessary}, while the fact that they are sufficient is proved in Section~\ref{sec:classofgraphs} by explicitly constructing a $\Sigma$-graph $G$ such that $\sigma(G)=\tau$ whenever $\tau$ satisfies the two conditions.

A notion that will be important in the proof of our results is that of \emph{special loops}, which we informally define here (see Section~\ref{sec:specialloops} for the precise definition). Let $G$ be a $\sigma$-graph and let $1\leq i \leq 3$. The special loop $C_i^1$ is the tightest simple loop of type $i$ that can be drawn on $G$, tightest in the sense that it is the closest to $P_i$. If $2\leq k\leq M_i(G)$, the special loop $C_i^k$ is the tightest simple loop of type $i$ that can be drawn on $G$  while having no vertex in common with $C_i^1,\ldots,C_i^{k-1}$. See Figure~\ref{fig:specialloops} for an illustration. We define
\[
n_i(G):=M_{i+1}(G)+M_{i+2}(G)-d_i(G).
\]
We will see in Section~\ref{sec:specialloops} that the non-negative integer $n_i(G)$, called the \emph{depth of intersection} of the special loops of types $i+1$ and $i+2$ can be interpreted geometrically as the number of special loops of type $i+1$ (resp. $i+2$) having at least one vertex in common with some special loop of type $i+2$ (resp.  $i+1$). Defining the sextuple
\[
\sigma'(G):=(M_1(G),M_2(G),M_3(G),n_1(G),n_2(G),n_3(G))\in(\Z_+)^6,
\]
we can rephrase Theorem~\ref{thm:charac} with the variables $n_i$ replacing the variables $d_i$, producing more compact inequalities.

\begin{figure}[htbp]
\centering
\includegraphics[height=1.5in]{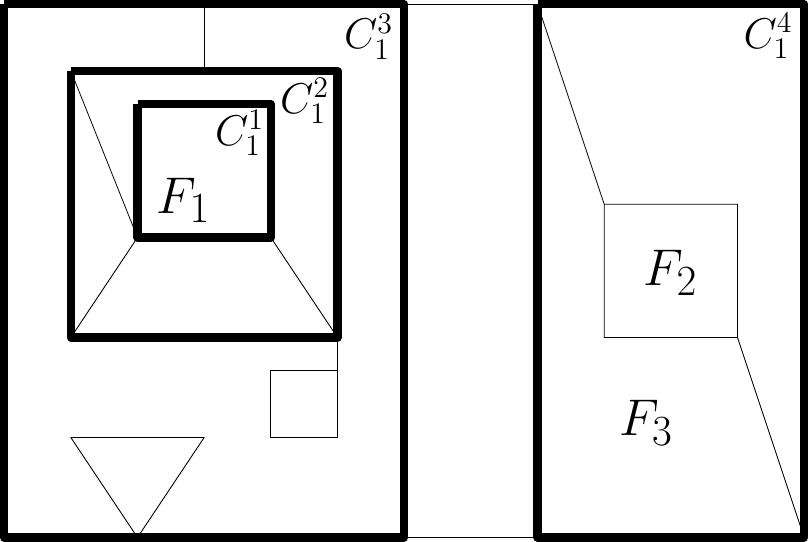}
\caption{Representation in bold of the special loops $C_1^k$ for the graph $G$ of Figure~\ref{fig:facedistances}.}
\label{fig:specialloops}
\end{figure}

\begin{theorem}
\label{thm:characbis}
Fix $\tau'=(\mu_1,\mu_2,\mu_3,\nu_1,\nu_2,\nu_3)\in(\Z_+)^6$. There exists a $\Sigma$-graph $G$ such that $\sigma'(G)=\tau'$ if and only if for all $1\leq i\leq 3$ we have
\begin{equation}
\label{eq:nureformulation}
0 \leq \nu_i \leq \min(\mu_{i+1},\mu_{i+2},\nu_{i+1}+\nu_{i+2}+1).
\end{equation}
\end{theorem}

It is not hard to see that Theorem~\ref{thm:characbis} is an immediate consequence of Theorem~\ref{thm:charac}.

In order to construct the special loops, we will explore any $\Sigma$-graph $G$ starting from the face $F_1$, discover a first layer consisting of the faces at distance $1$ from $F_1$, then a second layer consisting of the faces at distance $2$, etc. We will perform the same exploration starting from the faces $F_2$ and $F_3$ and understand how the boundaries of the layers arising in each of these three explorations interact with each other. In the case of simple triangulations, our exploration process coincides with the layer decomposition developed by Krikun~\cite{Kr} for infinite triangulations. More generally, this construction resembles the peeling process for planar maps (see for example~\cite{C}). The difference is that here we use a distance which differs slightly from the graph distance on the dual graph. Instead of peeling an edge by discovering the face on the other side of the edge, we are peeling a vertex, by discovering all the unknown faces containing a vertex which is on the boundary of what we have already explored. In the case of graphs with all the vertices having degree $3$, our exploration process coincides with the edge peeling process.

\section{Special loops around a puncture}
\label{sec:specialloops}

In this section we first describe an exploration process of a $\Sigma$-graph $G$ starting from a marked face, which will trace out a collection of special loops on $G$ centered around a marked face. The key result of this section, which will be used repeatedly in the rest of the article, is Lemma~\ref{lem:simpleloopstructure} describing the properties of the boundaries encountered during the exploration process. Then we will study how two collections of special loops intersect each other and deduce from this a proof of Proposition~\ref{prop:onegraph}.

\subsection{A collection of special loops around a puncture}
\label{subsec:specialloops}

We start by an elementary observation, which we will be using several times. Let $G$ be a connected planar graph and $\tilde{G}$ be a subgraph of $G$ (not necessarily an induced subgraph of $G$). One defines the distance function $d_{\tilde{G}}$ on the set of connected components of $\S^2 \setminus \tilde{G}$ in exactly the same way as the distance $d_G$ was defined on the faces of $G$. Note that the connected components of $\S^2 \setminus \tilde{G}$ do not have to be topological disks, they may be disks with multiple punctures or even the whole sphere if $\tilde{G}$ is empty. Then we have the following result.

\begin{lemma}
\label{lem:distances}
Let $G$ be a connected planar graph and $\tilde{G}$ be a subgraph of $G$. Let $F$ and $F'$ be two faces of $G$ and let $\tilde{F}$ and $\tilde{F'}$ be the two connected components of $\S^2 \setminus \tilde{G}$ containing respectively $F$ and $F'$. Then $d_{\tilde{G}}(\tilde{F},\tilde{F'}) \leq d_G(F,F')$.
\end{lemma}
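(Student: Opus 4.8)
The plan is to lift a $d_G$-geodesic from $F$ to $F'$ to a walk of no greater length in the augmented dual of $\tilde{G}$. Since $G$ is connected, the dual graph $G^*$ is connected, hence so is $\tilde{G^*}$, and $d_G(F,F')$ is finite; fix a geodesic, i.e.\ a sequence of faces $F=F_0,F_1,\dots,F_n=F'$ of $G$ with $n=d_G(F,F')$ such that for each $0\le j\le n-1$ the faces $F_j$ and $F_{j+1}$ share a vertex $v_j$ of $G$. For each $j$ let $\tilde{F}_j$ be the connected component of $\S^2\setminus\tilde{G}$ containing $F_j$, so that $\tilde{F}_0=\tilde{F}$ and $\tilde{F}_n=\tilde{F'}$. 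I claim $d_{\tilde{G}}(\tilde{F}_j,\tilde{F}_{j+1})\le 1$ for every $j$; summing over $j$ and using the triangle inequality for the graph distance $d_{\tilde{G}}$ then yields $d_{\tilde{G}}(\tilde{F},\tilde{F'})\le n=d_G(F,F')$, which is the assertion.

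To prove the claim I distinguish two cases according to whether $v_j$ belongs to $\tilde{G}$. If $v_j\in\tilde{G}$, then $v_j\notin\tilde{F}_j$ while $v_j\in\overline{F_j}\subseteq\overline{\tilde{F}_j}$, so $v_j$ lies on the boundary of $\tilde{F}_j$; likewise $v_j$ lies on the boundary of $\tilde{F}_{j+1}$. Hence $\tilde{F}_j$ and $\tilde{F}_{j+1}$ share the vertex $v_j$ of $\tilde{G}$, so $d_{\tilde{G}}(\tilde{F}_j,\tilde{F}_{j+1})\le 1$. If instead $v_j\notin\tilde{G}$, then, using the convention that a subgraph of $G$ comes together with both endpoints of each of its edges, no edge of $\tilde{G}$ is incident to $v_j$, so a small open neighbourhood $U$ of $v_j$ is disjoint from $\tilde{G}$. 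Both $F_j$ and $F_{j+1}$ meet $U$, and $U$ is a connected subset of $\S^2\setminus\tilde{G}$, so $F_j$ and $F_{j+1}$ lie in the same component: $\tilde{F}_j=\tilde{F}_{j+1}$ and $d_{\tilde{G}}(\tilde{F}_j,\tilde{F}_{j+1})=0$.

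The only delicate point is the case $v_j\notin\tilde{G}$, and it rests entirely on the bookkeeping convention that a subgraph contains the endpoints of all of its edges; once that is granted, the argument is purely local around $v_j$, and the rest is the triangle inequality for $d_{\tilde{G}}$ applied along the lifted geodesic. (One should also note in passing that the components of $\S^2\setminus\tilde{G}$ need not be disks, but this plays no role: the definition of $d_{\tilde{G}}$ only refers to two components sharing a vertex of $\tilde{G}$, which is exactly what the above case analysis produces.)
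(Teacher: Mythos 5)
The paper omits the proof of this lemma entirely (``the proof of which is easy and omitted''), so there is no official argument to compare against; your proof is correct and is the natural one the author presumably had in mind. Pushing a $d_G$-geodesic forward to the components of $\S^2\setminus\tilde{G}$ and checking that each step costs at most $1$ --- the two components share the vertex $v_j$ when $v_j\in\tilde{G}$, and coincide when $v_j\notin\tilde{G}$ since a small neighbourhood of $v_j$ avoids the closed set $\tilde{G}$ and connects $F_j$ to $F_{j+1}$ --- is exactly the intended ``easy'' argument, and your handling of the two cases is sound.
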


\begin{proof}
Observe that $d(F,F')$ is the smallest value of $N$ for which there exists a sequence of faces $(F_0,\ldots,F_N)$ such that $F_k$ and $F_{k+1}$ have a common vertex for all $0\leq k\leq N-1$, $F_0=F$ and $F_N=F'$. A similar characterization holds for $d_{\tilde{G}}$, replacing faces by connected components of $\S^2 \setminus \tilde{G}$. Set $N=d(F,F')$ and pick a sequence $(F_0,\ldots,F_N)$ as above. Considering the sequence $(\tilde{F}_0,\ldots,\tilde{F}_N)$, where $\tilde{F}_k$ denotes the connected component of $\S^2 \setminus \tilde{G}$ containing $F_k$, we conclude that $d_{\tilde{G}}(\tilde{F},\tilde{F'})\leq N$.
\end{proof}

Let $G$ be a $\Sigma$-graph. For any $k\geq0$ and $1\leq i\leq 3$, define
\begin{equation}
A^k_i=\left\{F\in\calF\vert d_G(F,F_i)=k\right\}.
\end{equation}
For any $k\geq1$ and $1\leq i\leq 3$ such that $A^k_i$ is nonempty, define $B^k_i$ to be the boundary of the set $\bigcup_{j=0}^{k-1} A^j_i$ of faces that are at distance less than $k$ to $F_i$. Each $B^k_i$ is the union of simple loops that are pairwise edge-disjoint but not necessarily pairwise vertex-disjoint. The case when $B^k_i$ consists in the union of several loops corresponds to a branching event in the peeling terminology, see e.g.~\cite{BCK}. The following lemma describes structural properties of the simple loops in $B^k_i$ and will be used repeatedly in the remainder of the article.

\begin{lemma}
\label{lem:simpleloopstructure}
Let $G$ be a $\Sigma$-graph, let $1 \leq i\leq 3$ and let $k \geq 1$ be such that $B^k_i$ is defined. Then, we have
\begin{equation}
\label{eq:universes}
\bigcup_{j \geq k} A^j_i = \bigcup_{\substack{C \subset B^k_i \\ C \text{ simple loop}}} \bar{H'_i}(C).
\end{equation}
Furthermore, if $C$ and $C'$ are two distinct simple loops contained in $B^k_i$, then $H'_i(C) \cap H'_i(C') = \emptyset$.
Finally, if $C\subset B^k_i$ is a simple loop, then the faces in $\bar{H_i}(C)$ sharing an edge with $C$ are in $A^{k-1}_i$.
\end{lemma}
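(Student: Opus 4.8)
The plan is to prove the three assertions of Lemma~\ref{lem:simpleloopstructure} in the order in which they are stated, using the layer structure of the exploration together with Lemma~\ref{lem:distances}.

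First I would establish \eqref{eq:universes}. Write $\calU_i^k = \bigcup_{j\geq k} A_i^j$ for the set of faces at distance at least $k$ from $F_i$, so that by definition $B_i^k$ is the topological boundary of the complementary region $\bigcup_{j=0}^{k-1} A_i^j$. Since $B_i^k$ is a union of pairwise edge-disjoint simple loops, removing it from $\S^2$ splits the sphere into the open region $R^{<k}$ (an open neighbourhood of $F_i$) containing the faces at distance $<k$, and the closed region consisting of the union of the closed hemispheres $\bar H_i'(C)$ over simple loops $C\subset B_i^k$; the union over $C$ is exactly the closure of $\S^2\setminus R^{<k}$. It is then immediate that a face $F$ lies in $\calU_i^k$ iff $F\subset \bar H_i'(C)$ for some such $C$, which is \eqref{eq:universes}. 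One should be slightly careful to justify that these closed hemispheres are the connected components of $\overline{\S^2\setminus R^{<k}}$ and that they meet only along vertices of $B_i^k$ --- this is where the "branching event" picture is used.

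Next, the disjointness claim $H_i'(C)\cap H_i'(C')=\emptyset$ for distinct simple loops $C,C'\subset B_i^k$: since $P_i$ lies in $F_i$ which is at distance $0<k$, the point $P_i$ lies in $H_i(C)$ and in $H_i(C')$ for every such loop; hence $H_i'(C)$ and $H_i'(C')$ are precisely the open regions "on the far side" of $C$ and $C'$ from $P_i$. If they intersected, then since $C$ and $C'$ are edge-disjoint simple closed curves, one of them would have to cross the other or be contained inside it; being contained inside $H_i'(C')$ would force $C$ itself (a subset of $B_i^k$, hence of the boundary of $R^{<k}$) to lie in the interior of the far region of $C'$, contradicting that $C$ bounds the connected neighbourhood $R^{<k}$ of $P_i$. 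I expect this planarity/Jordan-curve argument to be the main obstacle, because it requires arguing carefully about how two edge-disjoint simple loops sitting on the same boundary set $B_i^k$ can be nested, and it is tempting to hand-wave; the clean way is to use \eqref{eq:universes} directly, noting $\bigcup_C \bar H_i'(C) = \calU_i^k$ and that the loops, being the connected components of $B_i^k$, bound disjoint complementary pieces, so two distinct ones can have at most vertices in common, whence their open far-hemispheres are disjoint.

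Finally, for the last assertion, let $C\subset B_i^k$ be a simple loop and let $F\in\bar H_i(C)$ be a face sharing an edge $e$ with $C$. Since $e\subset B_i^k$ is an edge of the boundary of $R^{<k}=\bigcup_{j<k}A_i^j$, the face on the $H_i(C)$ side of $e$ lies in $R^{<k}$, i.e. $F\in A_i^j$ for some $j\leq k-1$. On the other hand the face on the other side of $e$ lies, by \eqref{eq:universes}, in $\calU_i^k$, so it is at distance $\geq k$; since $F$ shares the vertex (indeed the edge) $e$ with that face, $d_G(F,F_i)\geq k-1$. Combining the two bounds gives $d_G(F,F_i)=k-1$, i.e. $F\in A_i^{k-1}$, as claimed. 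Here the inequality $d_G(F,F_i)\geq k-1$ uses only the triangle inequality for $d_G$ together with the fact that faces sharing an edge are at $d_G$-distance $1$; no appeal to Lemma~\ref{lem:distances} is needed for this part, though that lemma is the natural tool if one instead wants to phrase the argument in terms of the subgraph $B_i^k$.
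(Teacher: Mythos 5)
Your proposal treats the equality \eqref{eq:universes} as a topological triviality (``it is then immediate that\ldots''), but the inclusion $\bigcup_{C}\bar{H'_i}(C)\subset\bigcup_{j\geq k}A^j_i$ --- equivalently, the fact that every face at distance $<k$ from $F_i$ lies in $H_i(C)$ for \emph{every} simple loop $C\subset B^k_i$ --- is the combinatorial heart of the lemma, and it is false for a general connected union of closed faces. Concretely, nothing purely topological prevents the region $R^{<k}=\bigcup_{j<k}A^j_i$ from being, say, a disk $D$ containing $P_i$ together with one further face $F'$ touching $\partial D$ only at a vertex $v$: then $R^{<k}$ is connected, $C=\partial D$ is a simple loop contained in its boundary, and yet $F'\subset H'_i(C)$, so the closed far hemispheres would \emph{not} be the closure of $\S^2\setminus R^{<k}$ and \eqref{eq:universes} would fail. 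Ruling this out requires the distance structure: the paper takes a geodesic chain of faces $F_i=F^0,\ldots,F^n=F$ to a putative face $F\subset\bar{H'_i}(C)$ with $n\leq k-1$, notes that the last face $F^{k'}$ of the chain lying in $\bar{H_i}(C)$ satisfies $d_G(F_i,F^{k'})\leq k-2$ and shares a vertex $w$ with $C$, so that every face containing $w$ is at distance at most $k-1$ --- contradicting the fact that any edge of $B^k_i$ through $w$ must be adjacent to a face at distance at least $k$. You do flag that ``one should be slightly careful'' at exactly this point, but the flagged statement is the missing step, and the ``branching event picture'' does not supply it.

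The same gap propagates into your disjointness argument: the ``clean way'' you propose (``the loops, being the connected components of $B^k_i$, bound disjoint complementary pieces'') is circular, since distinct simple loops of $B^k_i$ may share vertices (so they need not be connected components of $B^k_i$), and the disjointness of the pieces they bound is precisely the assertion to be proved; your first argument handles only the nested configuration and not the case where $C$ and $C'$ cross at a shared vertex. The paper instead deduces disjointness from the two facts already established: if $H'_i(C)\cap H'_i(C')\neq\emptyset$, one finds a face in $\bar{H_i}(C)\cap\bar{H'_i}(C')$ sharing an edge with $C$ (or the symmetric situation), and such a face would have to be simultaneously at distance $k-1$ and at distance at least $k$. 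Your treatment of the last assertion is fine once \eqref{eq:universes} is available, but note that the reverse inclusion of \eqref{eq:universes}, which you also leave implicit, requires the paper's small edge-connected-component argument to produce, for each face at distance at least $k$, a bounding simple loop that actually lies in $B^k_i$.
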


\begin{proof}
Let $C \subset B^k_i$ be a simple loop and assume there is a face $F\subset \bar{H'_i}(C)$ such that $d_G(F_i,F) \leq k-1$. Then one can find a sequence $(F^0,\ldots,F^n)$ of faces such that $n=d_G(F_i,F)$, $F^0=F_i$, $F^n=F$ and for every $1 \leq j \leq n$, the faces $F^{j-1}$ and $F^j$ share a vertex. By construction we have that for every $1 \leq j  \leq n$, $d_G(F_i,F^j)=j$. Denoting by $k'$ the largest $j$ such that $F^j\subset \bar{H_i}(C)$, we have that $d_G(F_i,F^{k'}) \leq k-2$ and by connectedness of the path of faces from the hemisphere $H_i(C)$ to the hemisphere $H'_i(C)$, $F^{k'}$ shares a vertex $v$ with $C$. This yields a contradiction because all the faces containing the vertex $v$ are at distance at most $k-1$ of $F_i$, hence $B^k_i$ cannot pass through $v$ so $v$ cannot lie on $C$. We deduce from this that
\[
\bigcup_{\substack{C \subset B^k_i \\ C \text{ simple loop}}} \bar{H'_i}(C) \subset \bigcup_{j \geq k} A^j_i.
\]
Furthermore, by construction, every edge in $B^k_i$ has on one side a face in $A^{k-1}_i$ and on the other side a face in $A^k_i$. This implies that all the faces in $\bar{H_i}(C)$ that contain an edge in $C$ must be in $A^{k-1}_i$.

If $C$ and $C'$ are two distinct simple loops contained in $B^k_i$ and $H'_i(C) \cap H'_i(C') \neq\emptyset$ then we can find a face $F$ which satisfies one of the following two conditions:
\begin{enumerate}
 \item $F\subset \bar{H_i}(C) \cap \bar{H'_i}(C')$ and $F$ shares an edge with $C$ ;
 \item $F\subset \bar{H_i}(C') \cap \bar{H'_i}(C)$ and $F$ shares an edge with $C'$. 
\end{enumerate}
This yields a contradiction because it implies on the one hand that $d_G(F,F_i) = k-1$ and on the other hand that $d_G(F,F_i) \geq k$. Thus $H'_i(C)$ and $H'_i(C')$ must be disjoint.

Finally, let $F$ be a face in $A^j_i$ with $j\geq k$. We construct $F^c$ the connected component of $ \bigcup_{j \geq k} A^j_i$ containing $F$ as follows. We say that two faces in $\bigcup_{j \geq k} A^j_i$ are neighbors if they share an edge (not just a vertex) and $F^c$ is the set of all faces in $\bigcup_{j \geq k} A^j_i$ that are reachable from $F$ by walking across neighboring faces (these intermediate faces on the path must also lie in $\bigcup_{j \geq k} A^j_i$). Then $F^c$ is a connected set, with boundary denoted by $B^c$. There exists a simple loop $C\subset B^c$ such that $F^c \subset \bar{H'_i}(C)$. By construction of $B^c$, for every edge $e$ of $C$, the face adjacent to $e$ in the hemisphere $\bar{H_i}(C)$ is in $A^{k-1}_i$ and the face adjacent to $e$ in the hemisphere $\bar{H'_i}(C)$ is in $A^k_i$. Hence $C\subset B^k_i$. We conclude that
\[
\bigcup_{j \geq k} A^j_i \subset \bigcup_{\substack{C \subset B^k_i \\ C \text{ simple loop}}} \bar{H'_i}(C).
\]
\end{proof}

Fix $1 \leq  i \leq 3$ and recall that the indices $i+1$ and $i+2$ are considered modulo $3$. Since $d_G(F_i,F_{i+1})=d_{i+2}(G)$, Lemma~\ref{lem:simpleloopstructure} implies for any $1\leq k \leq d_{i+2}(G)$ the existence of a unique simple loop $C_{i,i+1}^k \subset B_i^k$ such that $F_{i+1} \subset \bar{H'_i}(C_{i,i+1}^k)$. The uniqueness follows from the fact that if $C$ and $C'$ are two distinct simple loops contained in $B^k_i$, then $H'_i(C) \cap H'_i(C') = \emptyset$. Hence at each step $1\leq k \leq d_{i+2}(G)$ there cannot be more than one loop $C$ in $B^k_i$ such that $F_{i+1} \subset \bar{H'_i}(C)$. Similarly, for any $1\leq k \leq d_{i+1}(G)$ there exists a unique simple loop $C_{i,i+2}^k \subset B_i^k$ such that $F_{i+2} \subset \bar{H'_i}(C_{i,i+2}^k)$. For $1\leq k \leq \min(d_{i+1}(G),d_{i+2}(G))-1$, if $C_{i,i+1}^k \neq C_{i,i+2}^k$, then $C_{i,i+1}^{k+1} \neq C_{i,i+2}^{k+1}$ by Lemma~\ref{lem:simpleloopstructure}. Thus there exists a unique integer $\tilde{M_i}\geq0$ such that for any $1 \leq k \leq \tilde{M_i}$ we have $C_{i,i+1}^k = C_{i,i+2}^k$ and for any $\tilde{M_i}+1 \leq k \leq \min(d_{i+1}(G),d_{i+2}(G))$ we have $C_{i,i+1}^k \neq C_{i,i+2}^k$. If $1 \leq k \leq \tilde{M_i}$ we denote simply by $C_i^k$ the simple loop $C_{i,i+1}^k = C_{i,i+2}^k$. The following lemma gives the value of $\tilde{M_i}$.

\begin{lemma}
\label{lem:specialloopdef}
Let $G$ be a $\Sigma$-graph. For any $1 \leq i\leq 3$, we have $\tilde{M_i}=M_i(G)$. Furthermore, for any fixed $1\leq i\leq 3$, the loops $(C^k_i)_{1\leq k \leq M_i(G)}$ are pairwise disjoint.
\end{lemma}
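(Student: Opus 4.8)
The plan is to prove the two inequalities $M_i(G)\ge \tilde{M_i}$ and $M_i(G)\le \tilde{M_i}$ separately, and I would begin from one elementary local fact about the layers. If a vertex $v$ lies on $B^k_i$, then every face of $G$ incident to $v$ is at $d_G$-distance $k-1$ or $k$ from $F_i$, with both values attained: since $B^k_i$ is the topological boundary of the (closed) region $\bigcup_{j<k}A^j_i$, some face at $v$ is at distance $\le k-1$ and some is at distance $\ge k$, and since two distinct faces sharing $v$ are at $d_G$-distance $1$, the triangle inequality squeezes all distances at $v$ into $\{k-1,k\}$ and forces both endpoints to occur. This already yields the disjointness of $(C^k_i)_{1\le k\le \tilde{M_i}}$: if some vertex $v$ lay on $C^k_i\cap C^\ell_i$ with $k<\ell$, then reading the fact off $B^k_i$ and off $B^\ell_i$ would force $\{k-1,k\}\cap\{\ell-1,\ell\}\neq\emptyset$, hence $\ell=k+1$, so that every face at $v$ would be at distance exactly $k$ — contradicting that $v\in B^k_i$ requires a face at distance $k-1$. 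It also gives $M_i(G)\ge \tilde{M_i}$, because each $C^k_i=C^k_{i,i+1}=C^k_{i,i+2}$ is a loop of type $i$ ($P_i\in H_i(C^k_i)$, while $F_{i+1}\subset\bar{H'_i}(C^k_i)$ and $F_{i+2}\subset\bar{H'_i}(C^k_i)$ force $P_{i+1},P_{i+2}\in H'_i(C^k_i)$), so $(C^k_i)_{1\le k\le \tilde{M_i}}$ is a family of $\tilde{M_i}$ pairwise disjoint simple loops of type $i$.

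For the reverse inequality $M_i(G)\le \tilde{M_i}$ I would fix any collection $D_1,\dots,D_m$ of pairwise disjoint simple loops of type $i$ on $G$ and show $m\le \tilde{M_i}$. Disjoint simple closed curves on $\S^2$ each separating $P_i$ from $\{P_{i+1},P_{i+2}\}$ are linearly ordered by inclusion of the hemisphere containing $P_i$, so I may label them with $P_i\in H_i(D_1)$ and $\bar{H_i}(D_1)\subsetneq\cdots\subsetneq\bar{H_i}(D_m)$. For $1\le j\le m$ set $\ell_j:=1+\min\{d_G(F,F_i) : F\subset \bar{H_i}(D_j),\ \bar{F}\cap D_j\neq\emptyset\}$. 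A shortest chain of faces from $F_i$ to a face $F\subset H'_i(D_j)$ must cross $D_j$, hence contains, strictly before $F$, a face $F'\subset\bar{H_i}(D_j)$ with $\bar{F'}\cap D_j\neq\emptyset$; this gives $d_G(F_i,F)\ge \ell_j$, i.e. every face strictly outside $D_j$ is at distance $\ge \ell_j$ from $F_i$. Moreover a face realizing the minimum defining $\ell_j$ touches $D_j$, hence (the loops being disjoint and $D_{j-1}$ lying strictly inside $D_j$) lies strictly outside $D_{j-1}$, so it is at distance $\ge \ell_{j-1}$ from $F_i$; therefore $\ell_j\ge \ell_{j-1}+1$, and since $\ell_1\ge 1$ we get $\ell_m\ge m$. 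Applying the first estimate to $D_m$ and to the faces $F_{i+1},F_{i+2}\subset H'_i(D_m)$ then forces $d_G(F_i,F_{i+1})\ge m$ and $d_G(F_i,F_{i+2})\ge m$; in particular $B^m_i$ is defined and $m\le \min(d_{i+1}(G),d_{i+2}(G))$.

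Now I would conclude by a connectedness argument. Let $U$ be the union of the closed faces at distance $\le m-1$ from $F_i$, so that $B^m_i=\partial U\subset U$, and let $V$ be the connected component of $\S^2\setminus U$ containing $P_{i+1}$. The open connected set $H'_i(D_m)$ is disjoint from $U$ (if it met $U$ it would meet the interior of some face at distance $\le m-1$, forcing that face into $H'_i(D_m)$ and hence to distance $\ge m$, a contradiction), and it contains both $P_{i+1}$ and $P_{i+2}$, so $P_{i+1},P_{i+2}\in V$. Every face meeting $V$ is at distance $\ge m$ from $F_i$, so $V\subset\bigcup_{t\ge m}A^t_i=\bigcup_{C\subset B^m_i}\bar{H'_i}(C)$ by Lemma~\ref{lem:simpleloopstructure}; since $V$ is disjoint from $B^m_i$, in fact $V\subset\bigcup_{C\subset B^m_i}H'_i(C)$, a disjoint union of open sets by the same lemma. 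Connectedness of $V$ then places $V$ inside a single $H'_i(C)$, and this loop $C\subset B^m_i$ satisfies $F_{i+1}\subset\bar{H'_i}(C)$ and $F_{i+2}\subset\bar{H'_i}(C)$, so $C=C^m_{i,i+1}=C^m_{i,i+2}$ by the uniqueness statements preceding the lemma. As $m\le \min(d_{i+1}(G),d_{i+2}(G))$ and $C^k_{i,i+1}\neq C^k_{i,i+2}$ for every $\tilde{M_i}<k\le \min(d_{i+1}(G),d_{i+2}(G))$, this forces $m\le \tilde{M_i}$. Together with the first paragraph this gives $\tilde{M_i}=M_i(G)$.

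The hard part will be the last step. The naive attempt to argue directly with the closed hemisphere $\bar{H'_i}(D_m)$ in place of $V$ breaks down, because $D_m$ may pass through branch vertices of $B^m_i$, so $\bar{H'_i}(D_m)$ need not sit inside a single petal $\bar{H'_i}(C)$; replacing it by the open component $V$ of $\S^2\setminus U$, which is automatically disjoint from $B^m_i$, is exactly what makes the connectedness argument go through, and getting this open/closed bookkeeping right is the crux. A secondary point needing care is the crossing estimate $d_G(F_i,F)\ge \ell_j$ and its iteration $\ell_j\ge \ell_{j-1}+1$, where one must verify that the relevant intermediate face genuinely touches $D_j$ and genuinely lies strictly outside $D_{j-1}$.
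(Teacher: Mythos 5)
Your proof is correct, and it follows the same overall skeleton as the paper's (prove the two inequalities separately, use the nesting of disjoint type-$i$ loops, and bound distances from below along a maximal family), but two sub-steps are carried out by genuinely different means. For the lower bound $d_G(F_i,F)\geq m$ for faces outside $D_m$, the paper simply invokes Lemma~\ref{lem:distances} applied to the sublamination $L$ (whose proof is omitted there), whereas you reprove the needed special case by hand via the quantities $\ell_j$ and the induction $\ell_j\geq\ell_{j-1}+1$; this makes your argument self-contained at the cost of the careful ``crossing'' and ``strictly outside $D_{j-1}$'' verifications you flag. For the final identification $C^m_{i,i+1}=C^m_{i,i+2}$, the paper deduces from $B^m_i\subset\bar{H_i}(C^m)$ that every simple loop in $B^m_i$ is contractible or of type $i$ and concludes from there, while you instead show that the connected open set $H'_i(D_m)$ avoids the explored region $U$, so $P_{i+1}$ and $P_{i+2}$ lie in the same component $V$ of $\S^2\setminus U$ and hence in a single petal $H'_i(C)$; your route avoids any discussion of topological types of boundary loops and, as you note, the passage from the closed hemisphere to the open component $V$ is exactly what makes the connectedness argument legitimate. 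Your disjointness argument via the local fact that all faces at a vertex of $B^k_i$ are at distance $k-1$ or $k$ (both attained) is a clean variant of the paper's edge-based argument using Lemma~\ref{lem:simpleloopstructure}. Both approaches are sound; yours is somewhat longer but more explicit where the paper leans on the unproved Lemma~\ref{lem:distances} and on a terse type-classification step.
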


\begin{proof}
Fix $1 \leq i \leq 3$. Consider $C_i^k$ and $C_i^{k'}$ for some $1 \leq k < k' \leq \tilde{M_i}$. First observe that these loops are nested, i.e. $H_i(C_i^k) \subset H_i(C_i^{k'})$. If there exists a vertex $v$ lying on both these loops, denoting by $e$ an edge of $C_i^k$ containing $v$ and by $F$ the face containing $e$ and lying in $\bar{H_i}(C_i^k)$, it follows from Lemma~\ref{lem:simpleloopstructure} that $d_G(F,F_i)=k-1$, which contradicts the fact that $F$ shares the vertex $v$ with some face $F'$ lying in $\bar{H'_i}(C_i^{k'})$ because $d_G(F',F_i)\geq k' > k$ again by Lemma~\ref{lem:simpleloopstructure}. So the loops $(C^k_i)_{1\leq k \leq \tilde{M_i}}$ are pairwise disjoint.

It remains to prove that $\tilde{M_i} = M_i(G)$. Since the simple loops $C^k_i$ of type $i$ are pairwise disjoint for $1 \leq k\leq \tilde{M_i}$, their union constitutes a lamination with $\tilde{M_i}$ loops of type $i$ hence $\tilde{M_i}\leq M_i(G)$.

Let $L$ be a lamination consisting in $M_i(G)$ simple loops of type $i$ denoted by $C^k$, $1\leq k \leq M_i(G)$, which are nested in such a way that for any $1 \leq k \leq M_i(G)-1$, $H_i(C^k) \subset H_i(C^{k+1})$. If $F$ is a face in $\bar{H'_i}(C^k)$ for some $1 \leq k \leq M_i(G)$, then by Lemma~\ref{lem:distances}, we have that  $d_G(F_i,F) \geq k$ hence
\begin{equation}
\label{eq:outsidealoop}
\bar{H'_i}(C^k) \subset \bigcup_{j \geq k} A^j_i.
\end{equation}
This implies that
\[
\bigcup_{j=0}^{k-1} A^j_i \subset \bar{H_i}(C^k).
\]
Recalling that $B^k_i$ is defined as the boundary of $\bigcup_{j=0}^{k-1} A^j_i$, we deduce that $B^k_i$ is well-defined for all $1 \leq k \leq M_i(G)$ and that $B^k_i \subset  \bar{H_i}(C^k)$. Since $C^k$ is of type $i$, this implies that any simple loop contained in $B^k_i$ is either contractible or of type $i$. So $k \leq \tilde{M_i}$. This statement holds for every $1\leq k\leq M_i(G)$ so $M_i(G) \leq \tilde{M_i}$.
\end{proof}

\begin{remark}
\label{rem:simplelooptypes}
For any $1 \leq k \leq M_i(G)$, the simple loop $C_i^k$ is of type $i$, for any $M_i(G)+1 \leq k \leq d_{i+1}(G)$ the simple loop $C_{i,i+1}^k$ is of type $i+1$ and for any $M_i(G)+1 \leq k \leq d_{i+2}(G)$ the simple loop $C_{i,i+2}^k$ is of type $i+2$.
\end{remark}

The loops $C^k_i$ are called \emph{special loops of type $i$}. These special loops are optimal if one wants to pack the maximum possible number of disjoint simple loops of a given type. For example $C^1_i$ is the ``tightest'' simple loop of type $i$ one can draw, $C^2_i$ is the ``tightest'' simple loop of type $i$ one can draw which would be disjoint from $C^1_i$, etc. See Figure~\ref{fig:specialloops} for an illustration.

\subsection{Intersection of two collections of special loops}

We will now describe how two collections of special loops of two different types intersect each other.

\begin{lemma}
\label{lem:interaction}
Let $G$ be a $\Sigma$-graph and let $1 \leq i \leq 3$. Fix two integers $1\leq j \leq d_i(G)$ and $1\leq k \leq d_i(G)$. Then $C_{i+1}^k \cap C_{i+2}^j = \emptyset$ if and only if $j+k \leq d_i(G)$. Furthermore, $H_{i+1}(C_{i+1}^k) \cap H_{i+2}(C_{i+2}^{d_i(G)+1-k}) = \emptyset$.
\end{lemma}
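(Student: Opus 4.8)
The plan is to compare the two collections of special loops $(C_{i+1}^k)$ and $(C_{i+2}^j)$ by using the fact, established in Lemma~\ref{lem:simpleloopstructure}, that for each $k$ the loop $C_{i+1}^k$ separates the faces at distance $<k$ from $F_{i+1}$ from the faces at distance $\geq k$, and symmetrically for $C_{i+2}^j$ with respect to $F_{i+2}$. First I would record the basic nesting and type information: since $1\leq k\leq d_i(G)=d_G(F_{i+1},F_{i+2})$, the loop $C_{i+1}^k$ is well-defined (this uses $k\leq\min(d_i(G),d_{i+2}(G))$, and one checks $d_i(G)\leq d_{i+2}(G)+ \cdots$ via the reasoning around Lemma~\ref{lem:simpleloopstructure} — more carefully, $C_{i+1}^k$ here means the loop $C_{i+1,i+2}^k$, which is defined for $1\leq k\leq d_G(F_{i+1},F_{i+2})=d_i(G)$), and by Remark~\ref{rem:simplelooptypes} it is of type $i+1$ or contractible; likewise $C_{i+2}^j=C_{i+2,i+1}^j$ is of type $i+2$ or contractible, and in both cases $F_i$ lies in the ``outer'' closed hemisphere $\bar H'_{i+1}(C_{i+1}^k)$ (resp.\ $\bar H'_{i+2}(C_{i+2}^j)$), while $F_{i+2}\subset\bar H'_{i+1}(C_{i+1}^k)$ and $F_{i+1}\subset\bar H'_{i+2}(C_{i+2}^j)$.

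For the ``if'' direction, suppose $j+k\leq d_i(G)$ but the loops share a vertex $v$. Let $e$ be an edge of $C_{i+1}^k$ through $v$ and $F$ the face containing $e$ on the $\bar H_{i+1}(C_{i+1}^k)$ side; by the last assertion of Lemma~\ref{lem:simpleloopstructure}, $d_G(F_{i+1},F)=k-1$, so every face through $v$ is at distance $\leq k$ from $F_{i+1}$; in particular every face through $v$ lies in $\bigcup_{\ell<k+1}A^\ell_{i+1}$, hence inside $\bar H_{i+1}(C_{i+1}^{k+1})$ if that loop exists, and in any case on the $F_{i+1}$-side. Symmetrically, every face through $v$ is at distance $\leq j$ from $F_{i+2}$. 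But then, chaining a geodesic of faces from $F_{i+1}$ to a face through $v$ and then to $F_{i+2}$, we get $d_G(F_{i+1},F_{i+2})\leq (k-1)+1+(j-1)+1 = j+k$ — actually one gets a sharper contradiction: pick the face $F$ above (distance $k-1$ from $F_{i+1}$) and the analogous face $F'$ through $v$ on the $C_{i+2}^j$ side (distance $j-1$ from $F_{i+2}$); since $F$ and $F'$ both contain $v$ they are at distance $1$, so $d_i(G)=d_G(F_{i+1},F_{i+2})\leq (k-1)+1+(j-1)=j+k-1<j+k\leq d_i(G)$, a contradiction. Hence the loops are disjoint when $j+k\leq d_i(G)$.

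For the ``only if'' direction, I argue the contrapositive: if $j+k\geq d_i(G)+1$, the loops must meet. The idea is that $C_{i+1}^k$ and $C_{i+2}^j$ both try to occupy the ``middle'' annular region between $F_{i+1}$ and $F_{i+2}$, and there is not enough room. Concretely, suppose they were disjoint. Then $C_{i+2}^j$ lies entirely in one of the two open hemispheres of $C_{i+1}^k$; since $C_{i+2}^j$ must separate $F_{i+2}$ from $F_{i+1}$, and $F_{i+1}\in\bar H_{i+1}(C_{i+1}^k)$ while $F_{i+2}\in\bar H'_{i+1}(C_{i+1}^k)$, the loop $C_{i+2}^j$ must lie in $H'_{i+1}(C_{i+1}^k)$, i.e.\ $\bar H'_{i+2}(C_{i+2}^j)\subset \bar H'_{i+1}(C_{i+1}^k)$ (the side of $C_{i+2}^j$ not containing $F_{i+2}$, which contains $F_{i+1}$, must contain all of $\bar H_{i+1}(C_{i+1}^k)$). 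Now consider the subgraph $\tilde G = C_{i+1}^k \cup C_{i+2}^j$: the faces $F_{i+1}$ and $F_{i+2}$ lie in two distinct components of $\S^2\setminus\tilde G$, separated by the annular region $R$ lying between the two loops, i.e.\ $R = \bar H'_{i+1}(C_{i+1}^k)\cap\bar H'_{i+2}(C_{i+2}^j)$ after removing the boundary loops. Any geodesic chain of faces of $G$ from $F_{i+1}$ to $F_{i+2}$ must enter this region by crossing a vertex of $C_{i+1}^k$ and leave by crossing a vertex of $C_{i+2}^j$. By the structure lemma, the last face before crossing $C_{i+1}^k$ into $R$ is at distance $k-1$ from $F_{i+1}$; the face just inside is at distance $k$; to then reach $C_{i+2}^j$ from the $F_{i+1}$-side, one needs — by the symmetric statement applied to $C_{i+2}^j$ — to reach a face at distance $j-1$ from $F_{i+2}$ adjacent to $C_{i+2}^j$, and then $F_{i+2}$ itself. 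Counting, $d_i(G)=d_G(F_{i+1},F_{i+2})\geq k + (j-1) + 1 = j+k$? — that would contradict $j+k\geq d_i(G)+1$, so in fact I must be more careful and instead argue that disjointness forces $d_G(F_{i+1},F_{i+2})\geq j+k$, whence $j+k\leq d_i(G)$, contradicting $j+k\geq d_i(G)+1$. The clean way: use Lemma~\ref{lem:distances} with $\tilde G = C_{i+1}^k$: the component of $\S^2\setminus C_{i+1}^k$ containing $F_{i+2}$ is $\bar H'_{i+1}(C_{i+1}^k)$ minus its boundary, which (if the loops are disjoint) also contains $C_{i+2}^j$ and $F_{i+2}$ at ``internal distance'' $\geq j$ — making this precise requires relating $d_G$-distances inside a hemisphere to the number of special loops, which is exactly the content of Lemma~\ref{lem:simpleloopstructure} iterated. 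Summing the two ``radii'' $k$ and $j$ across the common separating structure yields $d_i(G)\geq j+k-1$ at least; pushing to $d_i(G) \geq j + k$ — equivalently, that equality $j+k=d_i(G)+1$ already forces intersection — is the crux and uses that the innermost face-layer of the $C_{i+1}$-exploration at level $k$ and of the $C_{i+2}$-exploration at level $j$ cannot be strictly separated when $j+k>d_i(G)$, because a face at distance $k$ from $F_{i+1}$ in that annulus is at distance $\geq d_i(G)-k \geq j-1$ ... no smaller, and the two loops $C_{i+1}^k$, $C_{i+2}^{d_i(G)+1-k}$ then necessarily share a vertex. The main obstacle is precisely this ``only if'' half — proving that the room is exactly $d_i(G)$ and not $d_i(G)+1$ — and I would handle it by a careful double-counting along a single geodesic face-path from $F_{i+1}$ to $F_{i+2}$, using the last assertion of Lemma~\ref{lem:simpleloopstructure} at both ends to pin down the distances of the faces adjacent to the two loops.

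Finally, the last claim $H_{i+1}(C_{i+1}^k)\cap H_{i+2}(C_{i+2}^{d_i(G)+1-k})=\emptyset$ follows from the ``if'' direction together with the nesting/separation structure: set $j=d_i(G)+1-k$, so $j+k=d_i(G)+1>d_i(G)$, hence by what we just proved the two loops may intersect, but in any case $H_{i+1}(C_{i+1}^k)$ is the open hemisphere on the $F_{i+1}$ side; I claim it is disjoint from $H_{i+2}(C_{i+2}^j)$ (the open $F_{i+2}$ side). Indeed, applying the ``if'' direction to the pair $(k, j-1)$ — since $k+(j-1)=d_i(G)$ — gives $C_{i+1}^k\cap C_{i+2}^{j-1}=\emptyset$, and by the nesting of the $C_{i+2}$ family (Lemma~\ref{lem:specialloopdef} plus the nesting observation in its proof) $\bar H_{i+2}(C_{i+2}^{j-1})\supset H_{i+2}(C_{i+2}^{j})\cup$ (a collar), more precisely $H_{i+2}(C_{i+2}^{j})\subset H_{i+2}(C_{i+2}^{j-1})$ so $H_{i+2}(C_{i+2}^{j})$ is separated from $F_{i+1}$ by $C_{i+2}^{j-1}$, which is disjoint from $C_{i+1}^k$ and lies (as in the ``only if'' discussion) entirely on the $F_{i+2}$-free side of $C_{i+1}^k$; chaining the separations gives $H_{i+1}(C_{i+1}^k)\subset \bar H_{i+2}(C_{i+2}^{j-1})$, hence disjoint from $H_{i+2}(C_{i+2}^j)\subset H_{i+2}(C_{i+2}^{j-1})^{\circ}$... — here one has to be careful that $H_{i+1}(C_{i+1}^k)$ and $H_{i+2}(C_{i+2}^{j-1})$ are complementary-ish; cleanly: $C_{i+1}^k$ and $C_{i+2}^{j-1}$ disjoint and both separating $\{F_{i+1}\}$ from $\{F_{i+2}\}$ implies $H_{i+1}(C_{i+1}^k)$ and $H_{i+2}(C_{i+2}^{j-1})$ are disjoint, and since $H_{i+2}(C_{i+2}^{d_i(G)+1-k})\subset H_{i+2}(C_{i+2}^{j-1})$, we are done.
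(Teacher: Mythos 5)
Your ``if'' direction is exactly the paper's argument: from a common vertex $v$ you produce a face $F$ adjacent to $C_{i+1}^k$ in $\bar{H_{i+1}}(C_{i+1}^k)$ with $d_G(F_{i+1},F)=k-1$ and a face $F'$ adjacent to $C_{i+2}^j$ in $\bar{H_{i+2}}(C_{i+2}^j)$ with $d_G(F_{i+2},F')=j-1$, and the triangle inequality gives $d_i(G)\leq j+k-1$. Your proof of the final claim is also sound and is a legitimate alternative to the paper's: the paper works with the layer boundary $B_{i+2}^{d_i(G)-k}$, whereas you reduce to the disjoint pair $(C_{i+1}^k,C_{i+2}^{d_i(G)-k})$, note that two disjoint loops separating $P_{i+1}$ from $P_{i+2}$ have disjoint inner hemispheres, and use the nesting $H_{i+2}(C_{i+2}^{d_i(G)+1-k})\subset H_{i+2}(C_{i+2}^{d_i(G)-k})$; this is arguably cleaner (both versions share the same boundary case $k=d_i(G)$, where the index $d_i(G)-k$ degenerates).

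The genuine gap is the ``only if'' direction, and you flag it yourself. You correctly establish that disjointness forces $C_{i+2}^j\subset H'_{i+1}(C_{i+1}^k)$, but you then only obtain $d_i(G)\geq j+k-1$ and leave ``pushing to $d_i(G)\geq j+k$'' as an unresolved crux. The missing idea in the paper is a change of separating structure: do not use only the two outermost loops, but all $j+k$ loops $C_{i+1}^1,\ldots,C_{i+1}^k,C_{i+2}^1,\ldots,C_{i+2}^j$. By the nesting of each family together with the containment you already proved, these loops are pairwise disjoint, hence form a lamination $L$; in $\S^2\setminus L$ the components $F'_{i+1}$ and $F'_{i+2}$ containing $P_{i+1}$ and $P_{i+2}$ satisfy $d_L(F'_{i+1},F'_{i+2})=j+k$, since one must cross the $k$ nested loops around $P_{i+1}$ and then the $j$ nested loops around $P_{i+2}$ in series, and Lemma~\ref{lem:distances} gives $d_i(G)\geq j+k$ in one line. (Your single-geodesic approach could also be completed: any face of a geodesic face-path lying in $\bar{H'_{i+1}}(C_{i+1}^k)\cap\bar{H'_{i+2}}(C_{i+2}^j)$ is at distance at least $k$ from $F_{i+1}$ and at least $j$ from $F_{i+2}$ by~\eqref{eq:universes}, and such a face must exist because $\bar{H_{i+1}}(C_{i+1}^k)$ and $\bar{H_{i+2}}(C_{i+2}^j)$ are disjoint closed sets so no two consecutive faces can jump from one to the other. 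But as written, your argument stops short of either route.)
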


\begin{proof}

Assume that $C_{i+1}^k \cap C_{i+2}^j \neq \emptyset$. Let $v$ be a vertex in $C_{i+1}^k \cap C_{i+2}^j$, $e$ be an edge in $C_{i+1}^k$ containing $v$, $e'$ be an edge in $C_{i+2}^j$, $F$ be the face in $\bar{H_{i+1}}(C_{i+1}^k)$ containing $e$ and $F'$ be the face in $\bar{H_{i+2}}(C_{i+2}^j)$ containing $e'$. Then by Lemma~\ref{lem:simpleloopstructure}, $d_G(F_{i+1},F)=k-1$ and $d_G(F_{i+2},F')=j-1$. Since $F$ and $F'$ share the vertex $v$, we also have $d_G(F,F')=1$. By the triangle inequality, we conclude that $d_G(F_{i+1},F_{i+2}) \leq j+k-1$. Thus $j+k > d_i(G)$.

Conversely, assume that $C_{i+1}^k \cap C_{i+2}^j = \emptyset$. Then either $C_{i+2}^j \subset H_{i+2}(C_{i+1}^k)$ or $C_{i+2}^j \subset H_{i+1}(C_{i+1}^k)$. The latter alternative cannot be true, otherwise we would have $F_i \subset H_i(C_{i+1}^k) = H_{i+2}(C_{i+1}^k) \subset H_{i+2}(C_{i+2}^j)$, which would entail that $C_{i+2}^j$ is either contractible or of type $i+1$. Hence $C_{i+2}^j \subset H_{i+2}(C_{i+1}^k)$. Furthermore, as observed in the proof of Lemma~\ref{lem:specialloopdef}, the special loops of a given type are nested and disjoint, which implies that the loops $C_{i+1}^1,\ldots,C_{i+1}^k,C_{i+2}^1,\ldots,C_{i+2}^j)$ are pairwise disjoint, thus they form a lamination $L$. Let $F_{i+1}'$ (resp. $F_{i+2}'$) denote the connected component of $\S^2 \setminus L$ containing $P_{i+1}$ (resp. $P_{i+2}$) Then by Lemma~\ref{lem:distances}, since $L$ is a subgraph of $G$, we have $d_i(G)=d_G(F_{i+1},F_{i+2}) \geq d_L(F'_{i+1},F'_{i+2})=j+k$.

Finally, assume that $H_{i+1}(C_{i+1}^k) \cap H_{i+2}(C_{i+2}^{d_i(G)+1-k}) \neq \emptyset$. Then we can find a face $F\subset \bar{H_{i+1}}(C_{i+1}^k) \cap \bar{H_{i+2}}(C_{i+2}^{d_i(G)+1-k})$ which has at least one edge in common with $C_{i+2}^j$. By Lemma~\ref{lem:simpleloopstructure}, we have $F\in A_{i+2}^{d_i(G)-k}$ so $B_{i+2}^{d_i(G)-k}$ intersects $F$ (this intersection may be just a single vertex). On the other hand, it follows from Lemma~\ref{lem:simpleloopstructure} that $A_{i+2}^{d_i(G)-k-1} \subset H_{i+2}(C_{i+2}^{d_i(G)-k})$ so $B_{i+2}^{d_i(G)-k}\subset \bar{H_{i+2}}(C_{i+2}^{d_i(G)-k})$. Thus we obtain that $\bar{H_{i+1}}(C_{i+1}^k) \cap \bar{H_{i+2}}(C_{i+2}^{d_i(G)-k}) \neq \emptyset$, and the previous paragraph would entail that $k + (d_i(G)-k) > d_i(G)$. This is the desired contradiction, hence $H_{i+1}(C_{i+1}^k) \cap H_{i+2}(C_{i+2}^{d_i(G)+1-k}) = \emptyset$.
\end{proof}

It follows from Lemma~\ref{lem:interaction} that the nonnegative integer
\[
n_i(G)=M_{i+1}(G)+M_{i+2}(G)-d_i(G)
\]
counts the number of special loops of type $i+1$ (resp. $i+2$) which intersect some special loop of type $i+2$ (resp. $i+1$). For every $1 \leq i \leq 3$, we will call this integer $n_i(G)$ the \emph{depth of intersection} of the special loops of types $i+1$ and $i+2$.

We use the properties of these special loops to prove Proposition~\ref{prop:onegraph}.

\begin{proof}[Proof of Proposition~\ref{prop:onegraph}]
Assume $G$ has a lamination $L$ of type $(m_1,m_2,m_3)$. Let $1 \leq i \leq 3$. Then $m_i \leq M_i(G)$ by definition of $M_i(G)$. Furthermore, as in the proof of Lemma~\ref{lem:interaction}, we have $m_{i+1}+m_{i+2} \leq d_i(G)$. Thus $(m_1,m_2,m_3)\in \calP_{\sigma(G)}$

Conversely, assume that we have a triple of integers $(m_1,m_2,m_3)$ satisfying the six inequalities defining $\calP_{\sigma(G)}$. Set
\[
L=\left\{C^1_1,\ldots,C^{m_1}_1,C^1_2,\ldots,C^{m_2}_2,C^1_3,\ldots,C^{m_3}_3\right\}.
\]
Observe that for every $1\leq i \leq 3$, $C_i^k$ is well-defined because $k\leq m_i\leq M_i(G)$. By Lemma~\ref{lem:interaction}, the fact that $m_{i+1}+m_{i+2} \leq d_i(G)$ for every $i$ implies that these loops are pairwise disjoint. So $L$ is a lamination and its type is $(m_1,m_2,m_3)$ by construction.
\end{proof}

\section{Necessity of conditions $(T_1)$ and $(T_2)$}
\label{sec:necessary}

In this section, we prove one direction of Theorem~\ref{thm:charac}. Let $G$ be a $\Sigma$-graph. In order to alleviate notation, we will drop the dependency of $M_i$, $d_i$ and $n_i$ on $G$ in this section. We will show that the six components of $\sigma(G)$ satisfy the inequalities $(T_1)$ and $(T_2)$ of Theorem~\ref{thm:charac}. By symmetry it suffices to consider the case $i=1$.

\subsection{Inequalities $(T_1)$ are verified}

The inequalities $M_2 \leq d_1$ and $M_3\leq d_1$ follow from Lemma~\ref{lem:specialloopdef}, thus
\[
\max(M_2,M_3)\leq d_1.
\]
To prove the other inequality, we distinguish several cases.

\subsubsection*{Case when $M_2 \geq1$, $M_3 \geq 1$ and $C_2^{M_2}\cap C_3^{M_3} \neq \emptyset$.}

Let $v$ be a vertex in that intersection. Then one can find two faces $F$ and $F'$ containing $v$ and such that $F\subset \bar{H_2}(C^{M_2}_2)$, $F$ shares an edge with $C^{M_2}_2$, $F'\subset \bar{H_3}(C^{M_3}_3)$ and $F'$ shares an edge with $C^{M_3}_3$. By the triangle inequality and Lemma~\ref{lem:simpleloopstructure} we have
\[
d_G(F_2,F_3) \leq d_G(F_2,F) + d_G(F,F') + d_G(F', F_3) \leq (M_2-1) + 1 + (M_3 -1),
\]
thus, $d_1 \leq M_2+M_3-1$ in that case.

\subsubsection*{Case when $M_2 \geq1$, $M_3 \geq 1$ and $C_2^{M_2}\cap C_3^{M_3} = \emptyset$.}

In that case $d_G(\bar{H_2}(C_2^{M_2}),\bar{H_3}(C_3^{M_3}))\geq 1$, and since by Lemma~\ref{lem:simpleloopstructure} we have that
\[
\bigcup_{j \leq M_3-1} A_3^j \subset \bar{H_3}(C_3^{M_3}),
\]
we deduce that $d_G(\bar{H_2}(C_2^{M_2}),\bigcup_{j \leq M_3-1} A_3^j)\geq 1$. So
\begin{equation}
\label{eq:inclusion}
\bigcup_{j \leq M_3} A_3^j \subset \bar{H_2'}(C_2^{M_2})
\end{equation}
and since $M_2 \geq 1$, we have that $B_3^{M_3+1}$ is non-empty. By Lemma~\ref{lem:simpleloopstructure} there exists a simple loop $C \subset B_3^{M_3+1}$ such that $F_2 \subset \bar{H_3'}(C)$. Thus $\bar{H_3'}(C)=\bar{H_2}(C)$, and relation~\eqref{eq:inclusion} implies that $\bar{H_2}(C_2^{M_2}) \subset \bar{H_2}(C)$. Since $C$ is disjoint from all the $C_3^k$ with $1 \leq k \leq M_3$, it cannot be of type $3$ (this would contradict the fact that $M_3$ is the maximal number of disjoint simple loops of type $3$), thus $F_1 \subset \bar{H_3}(C)$. So $C$ is of type~$2$, hence has to intersect $C_2^{M_2}$, otherwise this would contradict the fact that $M_2$ is the maximal number of disjoint simple loops of type $2$. Considering the two non-disjoint simple loops $C \subset B_3^{M_3+1}$ and $C_2^{M_2} \subset B_2^{M_2}$, one concludes by selecting two appropriate faces $F$ and $F'$ as in the previous case and applying Lemma~\ref{lem:simpleloopstructure}, which yields $d_1 \leq M_2+M_3$.

\subsubsection*{Case when $M_2=0$ or $M_3=0$.}

We first show that $M_2$ and $M_3$ cannot be both zero.

\begin{lemma}
\label{lem:singlezero}
If $M_2=0$ then $M_1 \geq 1$ and $M_3 \geq 1$.
\end{lemma}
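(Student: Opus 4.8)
\textbf{Proof proposal for Lemma~\ref{lem:singlezero}.}

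The plan is to argue by contradiction: suppose $M_2 = 0$ and, say, $M_3 = 0$ (the argument for $M_1$ will be symmetric once we have ruled out the simultaneous vanishing of two of the $M_i$). The key idea is that $M_i = 0$ is a strong topological constraint on the $\Sigma$-graph: it means that \emph{no} simple loop of type $i$ can be drawn on $G$ at all. I would first record what $M_j = 0$ means in terms of the exploration from $F_j$. If $M_2 = 0$, then already $\tilde M_2 = 0$ by Lemma~\ref{lem:specialloopdef}, so $C_{2,1}^1 \neq C_{2,3}^1$; more importantly, every layer boundary $B_2^k$ (for $1 \le k \le \min(d_1,d_3)$) splits into distinct loops around $P_1$ and around $P_3$, and by Remark~\ref{rem:simplelooptypes} these are of type $1$ and type $3$ respectively, never of type $2$. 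In particular $B_2^1$ already separates $F_1$ from $F_3$ by two disjoint loops, one of type $1$ and one of type $3$.

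The heart of the argument is then to produce, from the assumption $M_2 = 0$, an explicit simple loop of type $3$ (hence $M_3 \ge 1$) and an explicit simple loop of type $1$ (hence $M_1 \ge 1$), contradicting $M_3 = 0$ in the bad case. Concretely, I would take the loop $C_{2,3}^1 \subset B_2^1$, which is well-defined since $d_1 = d_G(F_2,F_3) \ge 1$ (the graph is a genuine $\Sigma$-graph, so $F_2 \ne F_3$ and they are distinct faces). By the last sentence of Lemma~\ref{lem:simpleloopstructure} applied with $i = 2$, the faces in $\bar H_2(C_{2,3}^1)$ adjacent to $C_{2,3}^1$ lie in $A_2^0$, i.e. this loop is at ``distance $0$'' from $F_2$ on its inner side, and $F_3 \subset \bar H'_2(C_{2,3}^1)$. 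Since $\tilde M_2 = 0$ forces $C_{2,3}^1 \neq C_{2,1}^1$, the loop $C_{2,3}^1$ does not enclose $P_1$ together with $P_3$, so $F_1 \subset \bar H_2(C_{2,3}^1) = \bar H_1(C_{2,3}^1)$ while $F_3$ is on the other side; that is exactly the statement that $C_{2,3}^1$ is of type $3$. Hence $G$ carries a simple loop of type $3$ and $M_3 \ge 1$. The symmetric choice of $C_{2,1}^1$ gives a simple loop of type $1$, so $M_1 \ge 1$ as well. This simultaneously handles both conclusions of the lemma and rules out $M_2 = M_3 = 0$ (resp. $M_2 = M_1 = 0$).

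The main obstacle I anticipate is the careful bookkeeping of hemispheres and types: one must be sure that $C_{2,3}^1$ really is of type $3$ and not contractible, which is where the inequality $\tilde M_2 = 0 \Rightarrow C_{2,1}^1 \neq C_{2,3}^1$ is essential — without the distinctness of the two first-layer loops around $P_1$ and $P_3$, the loop $C_{2,3}^1$ could a priori enclose both punctures and be of type $2$, or enclose neither and be contractible. A secondary point to check is that $B_2^1$ is defined, i.e. that $A_2^1 \ne \emptyset$; this holds because $d_1 \ge 1$ ensures $F_3 \notin A_2^0$, so $A_2^0 \cup A_2^1 \cup \cdots$ is a proper nonempty decomposition and $B_2^1$ is a nonempty union of loops. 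Everything else is a direct application of Lemmas~\ref{lem:distances}, \ref{lem:simpleloopstructure} and \ref{lem:specialloopdef} together with the definition of the loop types, so the proof should be short once the topological picture is pinned down.
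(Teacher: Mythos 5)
Your argument is correct and follows essentially the same route as the paper: both proofs look at the first layer boundary $B_2^1$, identify the two loops enclosing $F_3$ and $F_1$ respectively (your $C_{2,3}^1$ and $C_{2,1}^1$), observe that they must be distinct because $M_2=0$ forbids a type-$2$ loop, and then use the hemisphere-disjointness clause of Lemma~\ref{lem:simpleloopstructure} to conclude that one is of type $3$ and the other of type $1$. The initial contradiction framing and the worry about contractibility are superfluous (a loop in $B_2^1$ enclosing $F_3$ always separates $P_2$ from $P_3$, so it is never contractible), but they do not affect the validity of the proof.
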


\begin{proof}
Assume that $M_2=0$. The boundary $B^1_2$ of $F_2$ is nonempty even though it contains no simple loop of type $2$. Since $d_G(F_3,F_2)\geq1$ and $d_G(F_1,F_2)\geq1$, by Lemma~\ref{lem:simpleloopstructure}, there exist two simple loops $C$ and $C'$ contained in $B^1_2$ such that $F_3 \subset \bar{H'_2}(C)$ and $F_1 \subset \bar{H'_2}(C')$. Furthermore, $C \neq C'$ otherwise $C$ would be of type $2$. By Lemma~\ref{lem:simpleloopstructure}, this implies that $H'_2(C) \cap H'_2(C') =\emptyset$, so $C$ is a simple loop of type $3$ and $C'$ is a simple loop of type $1$. Thus $M_3 \geq 1$ and $M_1 \geq 1$.
\end{proof}

In the remainder of the proof we assume that $M_2=0$. As in the proof of Lemma~\ref{lem:singlezero}, pick $C \subset B^1_2$ a simple loop of type $3$. Since $C \subset \bar{F_2} \subset \bar{H'_3} (C_3^{M_3})$, the special loop $C_3^{M_3}$ must intersect $C$ in at least a vertex $v$, otherwise $C$ would be an $(M_3+1)$-st simple loop of type $3$ which is disjoint from all the special loops $C_3^k$ with $1 \leq k \leq M_3$. Let $e$ be an edge of $C_3^{M_3}$ containing the vertex $v$ and let $F$ be the face in $\bar{H_3}(C_3^{M_3})$ containing $e$. Then $d_G(F_3,F)=M_3-1$ and $d_G(F,F_2)=1$ so $d_1=d_G(F_2,F_3)\leq M_3$.

\subsection{Inequality $(T_2)$ is verified}

By definition of $n_3$, we have that $d_3+1-M_1=M_2+1-n_3$. Hence it follows from Lemma~\ref{lem:interaction} that $H_1(C_1^{M_1}) \cap H_2(C_2^{M_2+1-n_3})= \emptyset$. Since both these hemispheres are open, we even have $\bar{H_1}(C_1^{M_1}) \cap H_2(C_2^{M_2+1-n_3})= \emptyset$, thus $C_1^{M_1}$ is disjoint from $H_2(C_2^{M_2+1-n_3})$. Similarly $C_1^{M_1}$ is disjoint from $H_3(C_3^{M_3+1-n_2})$. The nesting of the special loops implies that $C_2^{M_2-n_3} \cup C_3 ^{M_3-n_2}$ is contained in $H_1'(C_1^{M_1})$.

\subsubsection*{Case when $\min(M_2-n_3,M_3-n_2) \geq 1$ and $\max(M_2-n_3,M_3-n_2) \geq 2$.}

We reason by contradiction and assume that $n_1 > n_2+n_3 +1$. Without loss of generality assume that $M_2-n_3\geq 1$ and $M_3-n_2 \geq 2$, hence $M_2-n_3$ and $M_3-n_2-1$ are both at least $1$. By Lemma~\ref{lem:interaction}, since $M_2-n_3+M_3-n_2-1 > d_1$, we have that $C_2^{M_2-n_3}\cap C_3^{M_3-n_2-1} \neq \emptyset$. Thus $C_2^{M_2-n_3}\cap H_3(C_3^{M_3-n_2}) \neq \emptyset$ and we can draw from $C_2^{M_2-n_3}$ and $C_3^{M_3-n_2}$ a simple closed curve of type $1$ contained inside $H_1'(C_1^{M_1})$, which produces an $(M_1+1)$-th disjoint curve of type $1$, contradiction.

\subsubsection*{Case when $M_2=n_3$ or $M_3=n_2$.}

Without loss of generality assume that $M_2=n_3$. Then
\[
n_1 \leq M_2 \leq n_3 \leq n_2+n_3+1.
\]

\subsubsection*{Case when $M_2-n_3=M_3-n_2=1$.}

Then $n_1 = n_3+1+n_2+1-d_1 \leq n_2 +n_3+1$.

\section{Graphs achieving any $\sigma(G)$}
\label{sec:classofgraphs}

In this section, given $\tau=(\mu_1,\mu_2,\mu_3,\delta_1,\delta_2,\delta_3)\in(\Z_+)^3 \times \N^3$ satisfying inequalities $(T_1)$ and $(T_2)$, we construct a graph $G$ such that $\sigma(G)=\tau$. In the generic case, the graphs $G$ will be constructed by gluing together several building blocks, most of which will be Young diagrams. We point out that we resort to the Young diagram terminology to trigger the reader's imagination and avoid defining from scratch all the building blocks, but we do not expect that there should be a deeper connection to the theory of Young diagrams. Recall that the Young diagram $Y_{(\lambda_1,\ldots,\lambda_n)}$ associated with the partition $\lambda_1 \geq \lambda_2 \geq \cdots \geq \lambda_n\geq1$ is (in French notation) the diagram consisting in $n$ rows of left-aligned square boxes where the $i$-th row counted from the bottom contains $\lambda_i$ boxes.

\subsection{A class of graphs}

To any sextuple $t=(l_1,l_2,l_3,n_1,n_2,n_3)\in \Z_+^6$ such that $n_i \leq \min (l_{i+1},l_{i+2})$ for all $1 \leq i \leq 3$, we will first associate a graph $\Gamma_t$ which has the topology of the disk. We will then obtain $G_t$ by gluing two identical copies of $\Gamma_t$ along their boundaries. However, we will only glue three disjoint arcs of the boundary of one graph with three disjoint arcs of the boundary of the other graph, hence the result will be a three-holed sphere.

We first define the following building blocks:
\begin{itemize}
 \item the \emph{connector} $K$, which is a triangle with edges called (in cyclic order) $e'_1$, $e'_2$ and $e'_3$. See the top left picture of Figure~\ref{fig:buildingblocks}.
 \item for every $1 \leq i \leq 3$, the Young diagram $Y_{(l_i)}$ consisting in a single row, called a \emph{leg}. We denote its vertical left edge by $E_i$, its vertical right edge by $e_i$, its bottom (resp. top) horizontal edges from right to left by $f_{i,i+1}^1,\ldots,f_{i,i+1}^{l_i}$ (resp. $f_{i,i-1}^1,\ldots,f_{i,i-1}^{l_i}$). See the top right picture of Figure~\ref{fig:buildingblocks}.
 \item for every $1 \leq i \leq 3$, the Young diagram $Y_{(n_i,n_i-1,\ldots,2,1)}$ consisting in $n_i$ rows, called a \emph{web}. We denote its horizontal edges on the bottom boundary from left to right by ${f'}_{i+1,i+2}^1,\ldots,{f'}_{i+1,i+2}^{n_i}$ and its vertical edges on the left boundary from bottom to top by ${f'}_{i+2,i+1}^1,\ldots,{f'}_{i+2,i+1}^{n_i}$. See the bottom picture of Figure~\ref{fig:buildingblocks}.
\end{itemize}


\begin{figure}[htbp]
\centering
\includegraphics[height=4in]{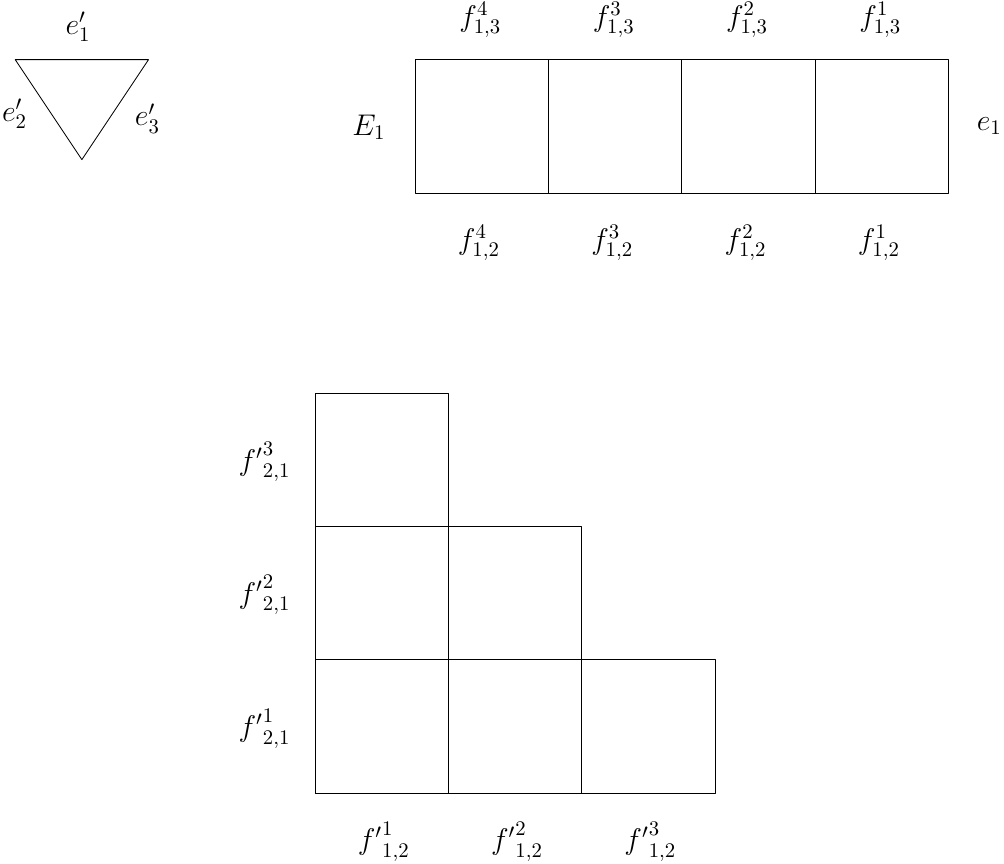}
\caption{Illustration of the different types of building blocks for the graph $\Gamma_t$: the connector $K$ (top left), the leg corresponding to $i=1$ when $l_1=4$ (top right) and the web between $1$ and $2$ when $n_3=3$ (bottom).}
\label{fig:buildingblocks}
\end{figure}

Next, for every $1 \leq i \leq 3$, we glue the edges $e_i$ with $e'_i$ and for every $1 \leq k \leq n_i$ we glue $f_{i+1,i+2}^k$ with ${f'}_{i+1,i+2}^k$ and $f_{i+2,i+1}^k$ with ${f'}_{i+2,i+1}^k$. After gluing two edges together, the result is a single edge. See Figure~\ref{fig:webbedgraph} for an example. We call the resulting graph $\Gamma_t$. It has the topology of the disk, with three distinguished edges $E_1$, $E_2$ and $E_3$ on its boundary.

\begin{figure}[htbp]
\centering
\includegraphics[height=2.5in]{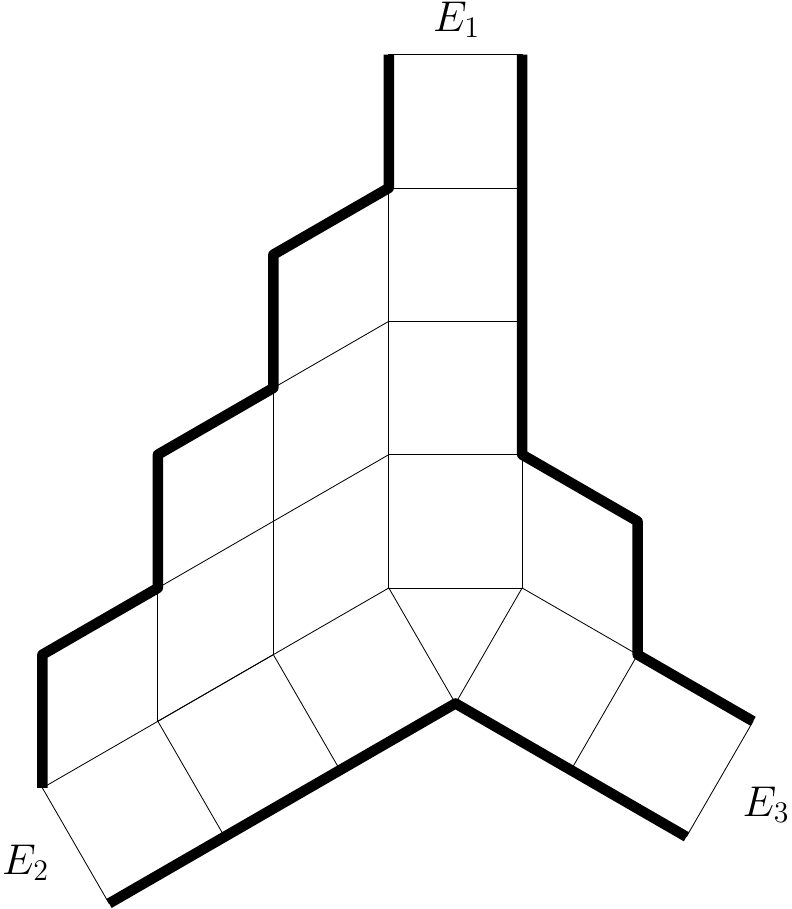}
\caption{The graph $\Gamma_t$ for $t=(4,3,2,0,1,3)$. The arcs of the boundary that will be glued to the corresponding arcs of an identical copy appear in bold stroke.}
\label{fig:webbedgraph}
\end{figure}

Let $\Gamma'_t$ be an identical copy of $\Gamma_t$. Each edge of the boundary of $\Gamma'_t$ is in canonical correspondence with an edge of the boundary of $\Gamma_t$. In particular, $\Gamma'_t$ has three distinguished edges $E'_1$, $E'_2$ and $E'_3$ on its boundary. We glue together each pair of corresponding edges, except the three pairs containing the distinguished edges. We call the resulting graph $G_t$. For every $1 \leq i\leq 3$ we denote by $F_i$ the digon with edges $E_i$ and $E'_i$.

One can compute the components of $\sigma(G_t)$ explicitly.

\begin{lemma}
\label{lem:sigmaGt}
Let $t=(l_1,l_2,l_3,n_1,n_2,n_3)\in \Z_+^6$ such that $n_i \leq \min (l_{i+1},l_{i+2})$ for all $1 \leq i \leq 3$. Then for any $1 \leq i \leq 3$,
\begin{align}
M_i(G_t) &= 1 + l_i + \max\left(0,\floor{\frac{n_i-\max(n_{i+1},n_{i+2})}{2}}\right) \\
d_i(G_t) &= 1 + l_{i+1} + l_{i+2} -n_i.
\end{align}
\end{lemma}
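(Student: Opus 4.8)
The plan is to compute $\sigma(G_t)$ directly from the explicit combinatorial structure of $G_t$, using the characterization of $M_i$ and $d_i$ via special loops from Section~\ref{sec:specialloops}. The key preliminary step is to understand the face distances in $\Gamma_t$ and in $G_t$ as explicit functions of the coordinates of the building blocks. Since $G_t$ is obtained by doubling $\Gamma_t$ (a pillowcase-type construction), a path of faces in $G_t$ crossing from one copy to the other must pass through one of the three marked digons $F_i$ or more precisely through the glued boundary arcs; I would first argue that the distance $d_G$ on $G_t$ between two faces both lying in the copy $\Gamma_t$ equals the analogous distance $d_{\Gamma_t}$ computed inside $\Gamma_t$ (with the boundary of $\Gamma_t$ treated as described in Lemma~\ref{lem:distances}'s setup), because crossing into $\Gamma'_t$ and back cannot shorten a path. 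This reduces everything to a planar computation on the disk graph $\Gamma_t$.

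Next I would set up coordinates on $\Gamma_t$. Each leg $Y_{(l_i)}$ contributes a strip of $l_i$ boxes, each web $Y_{(n_i,n_i-1,\ldots,1)}$ contributes a staircase, and the connector $K$ is a single triangle glued in the middle; the gluings identify the edges $e_i\sim e'_i$ and $f^k_{i+1,i+2}\sim {f'}^k_{i+1,i+2}$. I would give explicit names to the faces (boxes of each Young diagram plus the connector face) and then compute, for each face, its distance to $F_i$ — equivalently to the digon with edge $E_i$. Because the vertices of these graphs are mostly of degree $3$ (or need care where they are not), $d_G$ is essentially the dual graph distance with the small correction of Section~\ref{sec:mainresults}; I would verify that the correction does not change the relevant distances, or account for it. The formula $d_i(G_t)=1+l_{i+1}+l_{i+2}-n_i$ should then come out as: traverse leg $i+1$ (length $l_{i+1}$), cross the connector/web region (this is where the $-n_i$ saving appears, since the web of depth $n_i$ lets a path shortcut across the staircase), traverse leg $i+2$ (length $l_{i+2}$), plus $1$ for the final step into the target digon.

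For the $M_i(G_t)$ formula I would use Lemma~\ref{lem:specialloopdef}: $M_i(G_t)=\tilde M_i$ is the number of nested special loops of type $i$, which are the successive boundaries $B^k_i$ that remain ``type $i$'' before branching. The type-$i$ loops encircle leg $i$ together with a portion of the webs adjacent to it. The $1+l_i$ part counts the obvious nested loops, one ``around the tip'' and one for each box of the leg; the extra term $\max(0,\lfloor (n_i-\max(n_{i+1},n_{i+2}))/2\rfloor)$ accounts for additional type-$i$ loops that can be squeezed through the staircase regions of the webs ${f'}$ adjacent to $F_i$ when the web of depth $n_i$ is deeper than its two neighbouring webs — the floor and the factor $1/2$ arising because such a loop must go in and come back out through the staircase, consuming two levels per loop. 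I would make this precise by locating exactly which faces lie in $\bar{H'_i}(B^k_i)$ for each $k$ and checking when $B^k_i$ first fails to separate $F_{i+1}$ from $F_{i+2}$.

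The main obstacle will be the bookkeeping around the webs and the connector — precisely determining the distances of the staircase faces and pinning down the exact point at which the type-$i$ special loops stop being type $i$, since that is where the nontrivial $\max(0,\lfloor\cdot/2\rfloor)$ term is born and where the interaction of the three webs of depths $n_1,n_2,n_3$ must be handled simultaneously. I expect the cleanest route is to treat $\Gamma_t$ as a union of explicitly coordinatized regions, compute a distance function on its faces by an explicit formula (a max/sum of coordinate expressions in each region), verify it satisfies the defining property of $d_G$ by a local check, and then read off $d_i$ and $M_i$; the symmetry in $i$ reduces the work to one representative case, but the three webs breaking that symmetry within a single $i$ is what makes the $M_i$ computation delicate. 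I would also need to handle degenerate cases (some $l_i$ or $n_i$ equal to $0$) separately, checking the formulas still hold by inspection.
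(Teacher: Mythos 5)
Your plan follows essentially the same route as the paper's own proof: the paper likewise uses the mirror symmetry of the pillowcase construction to reduce to exploring $\Gamma_t$ layer by layer starting from $E_i$, constructs the special loops of Section~\ref{sec:specialloops} explicitly as the boundaries of the successive layers, and reads off $M_i(G_t)$ and $d_i(G_t)$ from the types of the resulting loops (determined by where their endpoints land on the boundary of $\Gamma_t$). The paper's printed proof is itself only a sketch that omits the web/connector bookkeeping you correctly flag as the delicate part, so the coordinatized verification you outline would, if carried out, be more detailed than what appears in the paper.
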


\begin{proof}
The proof consists in exploring $G_t$ layer by layer from a face $F_i$ and constructing explicitly the special loops defined in Section~\ref{sec:specialloops}. Looking at the type of each loop yields the desired conclusion. By symmetry of the graph $G_t$, it actually suffices to explore the graph $\Gamma_t$ layer by layer starting from an edge $E_i$ and draw the arcs corresponding to the boundary of each layer. Considering the endpoints of these arcs on the boundary of $\Gamma_t$ reveals their type when they are glued with a symmetric copy of themselves to form loops in $G_t$.
\end{proof}

\subsection{End of the proof of Theorem~\ref{thm:charac}}

Fix $\tau=(\mu_1,\mu_2,\mu_3,\delta_1,\delta_2,\delta_3)\in(\Z_+)^3 \times \N^3$ satisfying $(T_1)$ and $(T_2)$. Equivalently one can define $\nu_i:=\mu_{i+1}+\mu_{i+2}-\delta_i$, use the $\mu_i$ and $\nu_i$ as variables and require that they satisfy \eqref{eq:nureformulation}. Up to permuting the indices, one may assume that $\nu_3 \leq \nu_2 \leq \nu_1$. We will construct a $\Sigma$-graph $G$ such that $\sigma(G)=\tau$. For this we need to distinguish several cases.

\subsubsection*{Case when $\nu_2 < \nu_1 \leq \mu_1+ \nu_2$.}

Define $t=(l_1,l_2,l_3,n_1,n_2,n_3)$ by:
\begin{align}
l_1&= \nu_2-\nu_1+\mu_1 \\
l_2&= \mu_2-1 \\
l_3&= \mu_3-1 \\
n_1&= \nu_1-1 \\
n_2&= 2\nu_2 - \nu_1 \\
n_3&= \nu_2 + \nu_3 - \nu_1.
\end{align}
Then $t\in\Z^6_+$ and $n_i\leq \max(l_{i+1},l_{i+2})$ for all $1 \leq i \leq 3$. Furthermore, by Lemma~\ref{lem:sigmaGt}, $\sigma(G_t)=\tau$.

\subsubsection*{Case when $\nu_1=\nu_2$ and $\nu_3\geq1$.}

Define $t=(l_1,l_2,l_3,n_1,n_2,n_3)$ by $l_i=\mu_i-1$ and $n_i=\nu_i-1$ for all $1 \leq i \leq 3$. Then $t\in\Z^6_+$ and $n_i\leq \max(l_{i+1},l_{i+2})$ for all $1 \leq i \leq 3$. Furthermore, by Lemma~\ref{lem:sigmaGt}, $\sigma(G_t)=\tau$.

\subsubsection*{Case when $\nu_1=\nu_2$, $\nu_3=0$ and $\mu_3\geq1$.}

We construct $G$ as on Figure~\ref{fig:firstcase} by drawing $\mu_i$ nested loops around each hole $i$ such that:
\begin{itemize}
 \item the loops around hole $1$ are disjoint from the loops around hole $2$ ;
 \item the outermost $\nu_1$ (resp. $\nu_2$) loops around hole $2$ (resp. around hole $1$) intersect the outermost $\nu_1$ (resp. $\nu_2$) loops around hole $3$.
\end{itemize}
We also add line segments to make the graph $G$ connected. Then $\sigma(G)=\tau$.

\begin{figure}[htbp]
\centering
\includegraphics[height=1.5in]{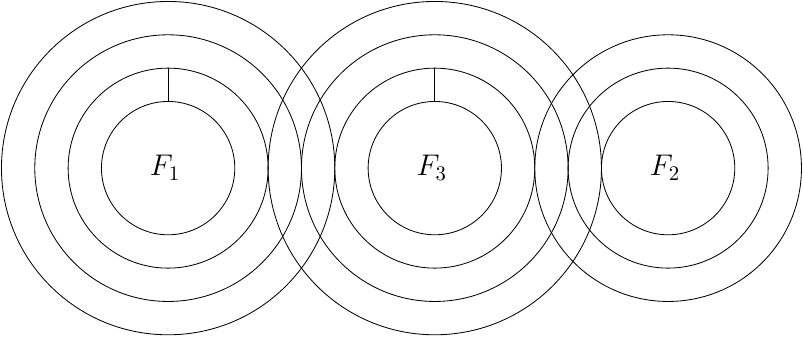}
\caption{The graph $G$ achieving $\tau=(4,3,4,4,5,7)$. Here $(\nu_1,\nu_2,\nu_3)=(3,3,0)$.}
\label{fig:firstcase}
\end{figure}

\subsubsection*{Case when $\nu_1=\nu_2$, $\nu_3=0$ and $\mu_3=0$.}

In that case, by the inequalities \eqref{eq:nureformulation}, we have that $\nu_1=\nu_2=\nu_3=\mu_3=0$. We construct $G$ by drawing $\mu_1$ nested loops around hole $1$ and $\mu_2$ nested loops around hole $2$ such that the two collections of loops are disjoint and we add a segment to each collection of nested loops to make them connected. Finally we add a single loop surrounding each collection and touching the outermost loop of each collection at a single point, see Figure~\ref{fig:secondcase}.

\begin{figure}[htbp]
\centering
\includegraphics[height=1.5in]{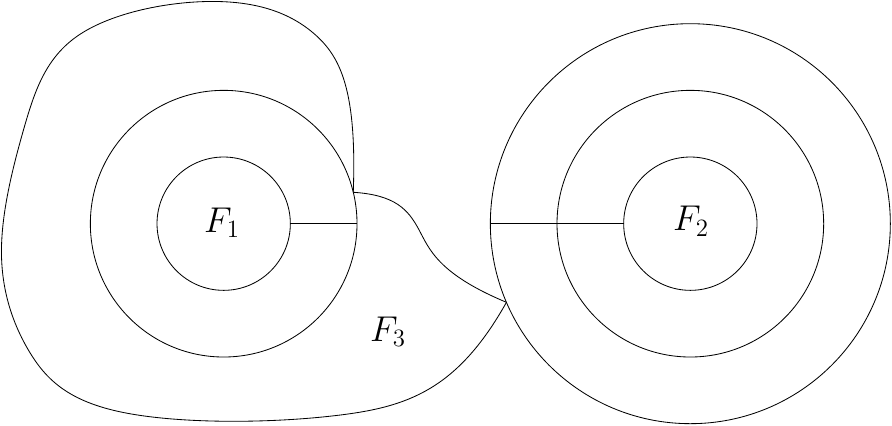}
\caption{The graph $G$ achieving $\tau=(2,3,0,3,2,5)$. Here $\nu_1=\nu_2=\nu_3=0$.}
\label{fig:secondcase}
\end{figure}

\subsubsection*{Case when $\nu_1 > \mu_1+ \nu_2$.}

It follows from \eqref{eq:nureformulation} that
\[
\nu_2+\mu_1-\nu_1 \geq \mu_1-\nu_3-1 \geq -1.
\]
Hence in the present case, we have $\nu_1=\mu_1+\nu_2+1$ and $\nu_3=\mu_1$. It also follows from \eqref{eq:nureformulation} and the fact that $\nu_2 \geq \nu_3$ that $\nu_2=\mu_1$. So $\nu_1=2\mu_1+1$ and $\nu_2=\nu_3=\mu_1$.

We construct $G$ by drawing two collections of $\mu_2$ nested loops around hole $2$ and $\mu_3$ nested loops around hole $3$ with intersection depth equal to $\nu_1$ and adding two line segments to make the graph connected. See Figure~\ref{fig:thirdcase} for an illustration. Then we have $\sigma(G)=\tau$.

\begin{figure}[htbp]
\centering
\includegraphics[height=2in]{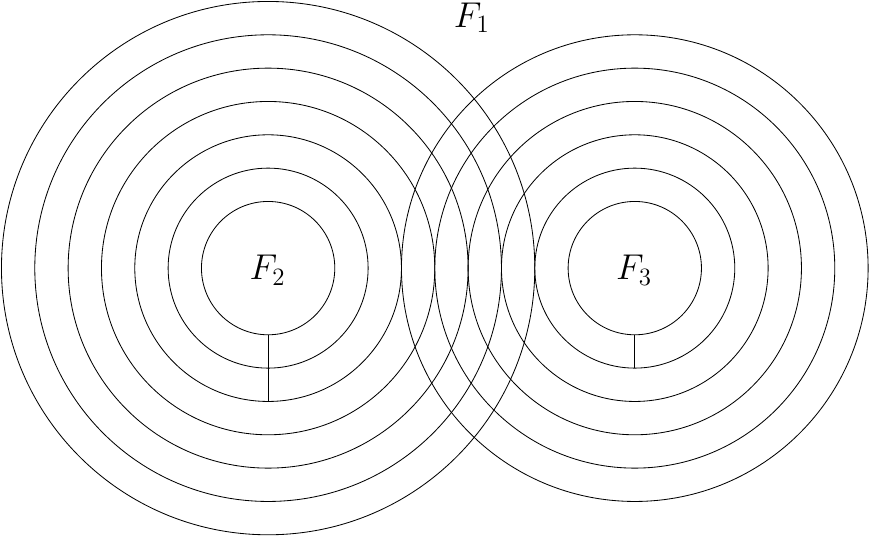}
\caption{The graph $G$ achieving $\tau=(2,7,6,8,6,7)$. Here $(\nu_1,\nu_2,\nu_3)=(5,2,2)$.}
\label{fig:thirdcase}
\end{figure}

\section*{Acknowledgements}

I thank Richard Kenyon for numerous valuable discussions throughout the course of this project, Adrien Kassel for several useful discussions and comments on an early draft of this paper and Pierre Tarrago for a fruitful conversation. I acknowledge the support of the Fondation Simone et Cino Del Duca and the Fondation Sciences Math\'ematiques de Paris during the completion of this work, as well as the hospitality of the Mathematical Sciences Research Institute in Berkeley, where this work was started during the program on ``Random spatial processes". Finally I am gratefully to the referee for advice on how to clarify the exposition.

\label{Bibliography}
\bibliographystyle{plain}
\bibliography{bibliographie}
\Addresses
\end{document}